\theoremstyle{plain}
 \newtheorem{thm}{Theorem}[section]
 \newtheorem{lemma}{Lemma}[section]
 \newtheorem{cor}{Corollary}[section]
\theoremstyle{definition}
\theoremstyle{remark}
\numberwithin{equation}{section}
\title{Durrmeyer type operators linked with Boas-Buck type polynomials }
\author{Naokant Deo and Sandeep Kumar}
\affil{Department of Mathematics, Delhi Technological University\\
Delhi-110042, India\\
 naokantdeo@dce.ac.in}
 \affil{Department of Mathematics, Maitreyi College, University of Delhi\\ Delhi 110021, India\\
sandeepkumardps@gmail.com}
\date{}
\begin{document}


\maketitle
\begin{abstract}
The present article intends to introduce the sequence of Baskakov-Durrmeyer type operators linked with the generating functions of Boas-Buck type polynomials. After calculating the moments, including the limiting case of central moments of the constructed sequence of operators, in the subsequent sections, we estimate the convergence rate using the Ditzian-Totik modulus of smoothness and some convergence results in Lipchitz-type space.
\end{abstract}
{\bf Keywords}: Boas-Buck polynomials, Modulus of smoothness, Lipschitz-type space, Rate of Convergence.\\

{\bf MSC}: 41A10, 41A25 and 41A30.

\section{Introduction}

Suppose that $ \xi, \mathcal{S}, \mathcal{T}, \mathcal{U} $ and $\mathcal{V}$ are analytic functions defined on some disc $|z|<\rho $ with $\rho >1.$ Let the power series expansions of these analytic functions are
\begin{eqnarray*}
 \xi(s)=\sum_{j=0}^{\infty}p_{j}s^{j},\,\,\,\,\,\,\,\,\,\,\,\,\,  \mathcal{S}(s)=\sum_{j=0}^{\infty}q_{j}s^{j},\,\,\,\,\,\,\,\,\,\,\,\,\, \mathcal{T}(s)=\sum_{j=0}^{\infty}r_{j}s^{j+1},
 \end{eqnarray*}
 \begin{eqnarray*}
 \mathcal{U}(s)=\sum_{j=0}^{\infty}u_{j}s^{j+2},\,\,\,\,\,\,\,\,\,\,\,\,\, \mathcal{V}(s)=\sum_{j=0}^{\infty}v_{j}s^{j+3},
\end{eqnarray*}where the coefficients of the above series representation meets with $p_{j}\neq0, q_{j}\neq 0 $ and $ r_{j}\neq 0. $ Now, according to R. Goyal \cite{Ritu} and E. D. Rainville \cite{Rainville}, we have polynomials generating function $\Theta_{j}(x)s^{j}$ as:

\begin{eqnarray*}
\mathcal{S}(s).\xi\left(x^2\mathcal{T}(s)+x \mathcal{U}(s)+ \mathcal{V}(s) \right)=\sum_{j=0}^{\infty}\Theta_j(x)s^j.
\end{eqnarray*}
Using the polynomial generating function $\Theta_j$, S. Verma and S. Sucu \cite{sucu:2} described the following operators
\begin{eqnarray}\label{eq1.1}
\mathcal{B}_n\left(f;x\right)=\frac{1}{\mathcal{S}(1).\xi\left(n^2x^2\mathcal{T}(1)+ nx \mathcal{U}(1)+\mathcal{V}(1)\right)}\sum_{j=0}^{\infty}\Theta_j(nx)f\left(\frac{j}{n}\right),
\end{eqnarray}with some restrictions for the positivity like $\xi(s)\geq0,\,\,\, \forall\,\ s \in \mathbb{R}$, and $\Theta_{j}(x)\geq 0; \,\ j=0,1,2 \ldots $ for all $x\geq0$ and $\mathcal{S}(1)>0$. In the reference of convergence of the sequence (\ref{eq1.1}) we assume that, $\mathcal{T}^{\prime}(1)=0, \mathcal{T}^{\prime\prime}(1)=0 $ and $\mathcal{U}^{\prime}(1)=1$ and apply the Korovkin-type theorem.
Many authors \cite{sucu:1}, \cite{Boas:1}, \cite{Boas:2}, \cite{Rasa}, \cite{gupta:1}, \cite{SANDEO:1} and \cite{Ana:1}  discussed the convergence of the related operators.
 The remarkable work of \cite{sucu:2}, \cite{PNA:1} and
\cite{SP1985} allow us to define a sequence of Baskakov-Durrmeyer type operators linked with the generating function of Boas-Buck type polynomials under the same assumptions and restrictions. \\
For $\sigma>0,$ consider a class of functions $C_{\sigma}\left[0,\infty\right)=\{f\in C\left[0,\infty\right): |f(s)|\leq M(1+s^\sigma)\}$, for some $M>0$ with the norm $\|f(s)\|=\displaystyle\sup_{s\in \left[0,\infty\right)}\frac{|f(s)|}{1+s^{\sigma}}.$
Now, for $f\in C_{\sigma}\left[0,\infty\right) $, we construct
\begin{eqnarray}\label{eq1.2}
\tilde{\mathcal{B}}_n\left(f;x\right)=\frac{1}{\mathcal{S}(1).\xi\left(n^2x^2\mathcal{T}(1)+ nx \mathcal{U}(1)+\mathcal{V}(1)\right)}\sum_{j=0}^{\infty}\frac{\Theta_j(nx)}{B(j,n+1)}\nonumber\\
\times\int_{0}^{\infty}\frac{s^{j-1}}{(1+s)^{n+j+1}}f(s)ds,
\end{eqnarray}where $B$ is a beta function, defined as $B(j,n+1)=\frac{\Gamma{(j)}\Gamma{(n+1)}}{\Gamma{(j+n+1)}}$.\\
Alternatively the operators (\ref{eq1.2}) may be written as:
\begin{eqnarray}\label{eq1.3}
\tilde{\mathcal{B}}_n\left(f;x\right)= \int_{0}^{\infty}\underline{\mathcal{V}}_{n}(x,s)(f;x)f(s)ds,
\end{eqnarray}where
\begin{eqnarray*}
\underline{\mathcal{V}}_{n}(x,s)=\frac{1}{\mathcal{S}(1).\xi\left(n^2x^2\mathcal{T}(1)+ nx \mathcal{U}(1)+\mathcal{V}(1)\right)}\sum_{j=0}^{\infty}\frac{\Theta_j(nx)}{B(j,n+1)}\frac{s^{j-1}}{(1+s)^{n+j+1}}.
\end{eqnarray*}
This article intends to investigate the convergence properties of the operators (\ref{eq1.2}).  Firstly, we calculate the moments and central moments, including the limiting case of the constructed                                                                                                                                                                                                                                                                                                                                                                                                                                                                                                                                                                                                                                                                                                                                                                                                                                                                                                                                                                                                                                                                                                                                                                                                                                                                                                                                                                                                              operators'  and then prove the convergence using Korovkin's theorem. After making some convergence results based on the modulus of continuity, Lipchitz-type space, and weighted space,  we estimates the convergence for the functions of bounded variations.
\section{Some results on the operators $ \mathcal{B}_n$ and $\tilde{\mathcal{B}}_n$}
\begin{lemma}\label{lemma2.1} From \cite{PNA:1},
for $x\in\left[0,\infty\right)$ and $p(x)=n^2x^2\mathcal{T}(1)+ nx \mathcal{U}(1)+\mathcal{V}(1)$ important results related to the operators (\ref{eq1.1}) are follows:
\begin{eqnarray*}
\mathcal{B}_n\left(1;x\right)&=&1\\
\mathcal{B}_n\left(s;x\right)&=&\frac{\xi^{\prime}\left(p(x)\right)}{\xi\left(p(x)\right)}x+\frac{1}{n}\left[\frac{\mathcal{S}^{\prime}(1)}{\mathcal{S}(1)}
+\mathcal{V}^{\prime}(1)\frac{\xi^{\prime}\left(p(x)\right)}{\xi\left(p(x)\right)}\right]\\
\mathcal{B}_n\left(s^2;x\right)&=& \frac{\xi^{\prime\prime}\left(p(x)\right)}{\xi \left(p(x)\right)}x^2 +\left[\frac{\xi^{\prime}\left(p(x)\right)}{\xi \left(p(x)\right)}\left(\frac{2\mathcal{S}^{\prime}(1)}{\mathcal{S}(1)}+\mathcal{U}^{\prime\prime}(1)+1\right)\right.\\
&&\left.
+2\mathcal{V}^{\prime}(1)\frac{\xi^{\prime\prime}\left(p(x)\right)}{\xi \left(p(x)\right)}\right]\frac{x}{n}+\left[\frac{\mathcal{S}^{\prime\prime}(1)+\mathcal{S}^{\prime}(1)}{\mathcal{S}(1)} +\left(\frac{2\mathcal{S}^{\prime}(1)\mathcal{V}^{\prime}(1)}{\mathcal{S}(1)}\right.\right.\\
&&\left.\left.+\mathcal{V}^{\prime\prime}(1)+\mathcal{V}^{\prime}(1)\right)
\frac{\xi^{\prime}\left(p(x)\right)}{\xi \left(p(x)\right)}+\left(\mathcal{V}^{\prime}(1)\right)^{2}\frac{\xi^{\prime\prime}\left(p(x)\right)}{\xi \left(p(x)\right)}  \right]\frac{1}{n^2}.
\end{eqnarray*}
\end{lemma}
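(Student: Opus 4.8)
The plan is to exploit the generating-function identity directly, converting each moment into a value of that function or of its $s$-derivatives at $s=1$. Write $\phi(x,s)=x^2\mathcal{T}(s)+x\mathcal{U}(s)+\mathcal{V}(s)$ and
\[
G(x,s)=\mathcal{S}(s)\,\xi\bigl(\phi(x,s)\bigr)=\sum_{j=0}^{\infty}\Theta_j(x)s^j,
\]
so that the three moments reduce to evaluating, at $s=1$ and with first argument $nx$, the power sums $\sum_j\Theta_j(nx)$, $\sum_j j\,\Theta_j(nx)$ and $\sum_j j^2\,\Theta_j(nx)$. Since $\phi(nx,1)=n^2x^2\mathcal{T}(1)+nx\mathcal{U}(1)+\mathcal{V}(1)=p(x)$, putting $s=1$ gives $\sum_j\Theta_j(nx)=G(nx,1)=\mathcal{S}(1)\xi(p(x))$, which is exactly the normalizing constant in \eqref{eq1.1}; dividing through yields $\mathcal{B}_n(1;x)=1$.

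For the higher moments I would read off the power sums from $s$-derivatives of $G$. Differentiating the series term by term,
\[
\sum_{j=0}^{\infty} j\,\Theta_j(nx)=\left.\partial_s G(nx,s)\right|_{s=1},\qquad
\sum_{j=0}^{\infty} j^2\,\Theta_j(nx)=\left.\bigl(\partial_s^2 G+\partial_s G\bigr)(nx,s)\right|_{s=1},
\]
the second identity coming from $j^2=j(j-1)+j$. By the product and chain rules, $\partial_s G=\mathcal{S}'\xi(\phi)+\mathcal{S}\xi'(\phi)\phi_s$ and $\partial_s^2 G=\mathcal{S}''\xi(\phi)+2\mathcal{S}'\xi'(\phi)\phi_s+\mathcal{S}\xi''(\phi)\phi_s^2+\mathcal{S}\xi'(\phi)\phi_{ss}$. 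The crucial simplification is that, under the stated assumptions $\mathcal{T}'(1)=\mathcal{T}''(1)=0$ and $\mathcal{U}'(1)=1$, the partials of $\phi$ at $(nx,1)$ collapse to $\phi_s(nx,1)=nx+\mathcal{V}'(1)$ and $\phi_{ss}(nx,1)=nx\,\mathcal{U}''(1)+\mathcal{V}''(1)$, the $n^2x^2$ contributions of $\mathcal{T}$ dropping out. All of this is legitimate because the defining series converge absolutely on $|z|<\rho$ with $\rho>1$, which justifies the term-by-term differentiation at $s=1$.

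Substituting these into $\partial_s G$ and $\partial_s^2 G$ at $s=1$, then dividing $\sum_j j\,\Theta_j(nx)$ by $n\,\mathcal{S}(1)\xi(p(x))$ and $\sum_j j^2\,\Theta_j(nx)$ by $n^2\,\mathcal{S}(1)\xi(p(x))$, and finally grouping the resulting terms by the powers $x^2$, $x/n$ and $1/n^2$ produces the three closed forms; for example the $\mathcal{S}\xi''(\phi)\phi_s^2$ term supplies the leading $\tfrac{\xi''(p(x))}{\xi(p(x))}x^2$ in $\mathcal{B}_n(s^2;x)$. I expect the only real obstacle to be bookkeeping rather than ideas: the second moment requires expanding $\bigl(nx+\mathcal{V}'(1)\bigr)^2$ and sorting roughly a dozen terms into the correct $n$-order, taking care that the $\mathcal{U}''(1)$ from $\phi_{ss}$ lands in the $x/n$ bracket while $\mathcal{V}''(1)$ and the $+\partial_s G$ correction from the $j^2=j(j-1)+j$ split distribute across the $x/n$ and $1/n^2$ brackets precisely as displayed.
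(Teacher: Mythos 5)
Your derivation is correct, and it is essentially \emph{the} proof of this lemma: the paper itself gives no argument for Lemma \ref{lemma2.1}, importing it verbatim from \cite{PNA:1}, and your generating-function computation (differentiate $G(x,s)=\mathcal{S}(s)\xi(\phi(x,s))$ in $s$, evaluate at $s=1$ with first argument $nx$, use $\mathcal{T}'(1)=\mathcal{T}''(1)=0$, $\mathcal{U}'(1)=1$, and split $j^2=j(j-1)+j$) is exactly the standard route taken in that reference and in \cite{sucu:2}. I verified the bookkeeping: with $\phi_s(nx,1)=nx+\mathcal{V}'(1)$ and $\phi_{ss}(nx,1)=nx\,\mathcal{U}''(1)+\mathcal{V}''(1)$, dividing $\partial_s G|_{s=1}$ by $n\,\mathcal{S}(1)\xi(p(x))$ and $\bigl(\partial_s^2 G+\partial_s G\bigr)|_{s=1}$ by $n^2\,\mathcal{S}(1)\xi(p(x))$ reproduces precisely the displayed coefficients of $x^2$, $x/n$ and $1/n^2$.
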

\begin{lemma}\label{lemma2.2}
For $x\in\left[0,\infty\right)$, and with the help of Lemma \ref{lemma2.1}, we have
\begin{eqnarray*}
\tilde{\mathcal{B}}_n\left(1;x\right)&=& 1\\
\tilde{\mathcal{B}}_n\left(s;x\right)&=& \frac{\xi^{\prime}\left(p(x)\right)}{\xi\left(p(x)\right)}x+\frac{1}{n}\left(\frac{\mathcal{S}^{\prime}(1)}{\mathcal{S}(1)}
+\mathcal{V}^{\prime}(1)\frac{\xi^{\prime}\left(p(x)\right)}{\xi\left(p(x)\right)}\right)\\
\tilde{\mathcal{B}}_n\left(s^2;x\right)&=&\frac{n}{n-1}\frac{\xi^{\prime\prime}\left(p(x)\right)}{\xi\left(p(x)\right)}x^2+ \frac{x}{n-1}\left[2 \mathcal{V}^{\prime}(1)\frac{\xi^{\prime\prime}\left(p(x)\right)}{\xi\left(p(x)\right)}+\left(2+2\frac{\mathcal{S}^{\prime}(1)}{\mathcal{S}(1)}\right.\right.\\
&&\left.\left.+\mathcal{U}^{\prime\prime}(1)\right)\frac{\xi^{\prime}\left(p(x)\right)}
{\xi\left(p(x)\right)} \right]+\frac{1}{n(n-1)} \left[\left(\mathcal{V}^{\prime\prime}(1)\right)^2\frac{\xi^{\prime\prime}\left(p(x)\right)}{\xi\left(p(x)\right)}\right.\\
&&\left.+\left(2\frac{\mathcal{S}^{\prime}(1)}{\mathcal{S}(1)}\mathcal{V}^{\prime}(1)+2\mathcal{V}^{\prime}(1)+\mathcal{V}^{\prime\prime}(1) \right)\frac{\xi^{\prime}\left(p(x)\right)}{\xi\left(p(x)\right)}+\frac{2\mathcal{S}^{\prime}(1)+\mathcal{S}^{\prime\prime}(1)}{\mathcal{S}(1)}\right].
\end{eqnarray*}
\end{lemma}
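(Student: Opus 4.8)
The plan is to reduce every moment of $\tilde{\mathcal{B}}_n$ to a moment of the discrete operator $\mathcal{B}_n$ from (\ref{eq1.1}), whose values are already recorded in Lemma \ref{lemma2.1}. The only analytic input needed is the elementary beta integral $\int_{0}^{\infty}\frac{s^{a-1}}{(1+s)^{a+b}}\,ds=B(a,b)$, valid for $a>0,\,b>0$. Everything else is Gamma-function bookkeeping followed by a comparison of series.

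First I would fix $j\ge 1$ and $k\in\{0,1,2\}$ and evaluate the inner integral against $f(s)=s^{k}$. Writing $a=j+k$ and $a+b=n+j+1$, so that $b=n+1-k$, gives $\int_{0}^{\infty}\frac{s^{j+k-1}}{(1+s)^{n+j+1}}\,ds=B(j+k,\,n+1-k)$, which converges precisely when $n+1-k>0$; this is exactly where the restriction $n\ge 2$ is forced in the case $k=2$. Dividing by $B(j,n+1)$ and cancelling the common factor $\Gamma(n+j+1)$ collapses the quotient to a pure ratio of Gamma functions,
\[
\frac{B(j+k,\,n+1-k)}{B(j,n+1)}=\frac{\Gamma(j+k)\,\Gamma(n+1-k)}{\Gamma(j)\,\Gamma(n+1)},
\]
and repeated use of $\Gamma(z+1)=z\,\Gamma(z)$ evaluates this to $1,\ \tfrac{j}{n},\ \tfrac{j(j+1)}{n(n-1)}$ for $k=0,1,2$ respectively. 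The formally ill-defined $j=0$ term is harmless because $1/B(0,n+1)=0$, so it simply drops out of (\ref{eq1.2}).

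Substituting these three factors back into (\ref{eq1.2}) and comparing with the definition (\ref{eq1.1}) (in which $f(j/n)$ contributes $1,\ \tfrac{j}{n},\ (\tfrac{j}{n})^2$), I expect the reduction identities $\tilde{\mathcal{B}}_n(1;x)=\mathcal{B}_n(1;x)=1$ and $\tilde{\mathcal{B}}_n(s;x)=\mathcal{B}_n(s;x)$, together with
\[
\tilde{\mathcal{B}}_n(s^{2};x)=\frac{n}{n-1}\,\mathcal{B}_n(s^{2};x)+\frac{1}{n-1}\,\mathcal{B}_n(s;x),
\]
the last one coming from the algebraic identity $\tfrac{j(j+1)}{n(n-1)}=\tfrac{n}{n-1}\big(\tfrac{j}{n}\big)^{2}+\tfrac{1}{n-1}\big(\tfrac{j}{n}\big)$. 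The zeroth moment equals $1$ by the generating-function normalization $\sum_{j}\Theta_{j}(nx)=\mathcal{S}(1)\,\xi(p(x))$, and the first two of these identities immediately deliver the stated values of $\tilde{\mathcal{B}}_n(1;x)$ and $\tilde{\mathcal{B}}_n(s;x)$ by quoting Lemma \ref{lemma2.1} verbatim.

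It then remains to insert the Lemma \ref{lemma2.1} expressions for $\mathcal{B}_n(s^{2};x)$ and $\mathcal{B}_n(s;x)$ into the displayed combination and collect terms by powers of $x$ and of $1/n$. The leading coefficient $\tfrac{n}{n-1}\tfrac{\xi''(p(x))}{\xi(p(x))}x^{2}$ appears at once; grouping the $x/n$ contributions of $\tfrac{n}{n-1}\mathcal{B}_n(s^{2};x)$ with the $\tfrac{1}{n-1}\mathcal{B}_n(s;x)$ term produces the $\tfrac{x}{n-1}[\,\cdots]$ bracket, and the surviving $1/n^{2}$ pieces scaled by $\tfrac{n}{n-1}$ assemble into the $\tfrac{1}{n(n-1)}[\,\cdots]$ bracket. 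The main obstacle is purely this final bookkeeping, and the two points that require genuine care are the convergence condition $n\ge 2$ needed for the quadratic moment and the consistent handling of the vanishing $j=0$ term throughout.
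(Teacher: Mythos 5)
Your proposal is correct and takes essentially the same route as the paper: the paper's proof consists precisely of the reduction identity $\tilde{\mathcal{B}}_n\left(s^2;x\right)=\frac{1}{n-1}\left[n\mathcal{B}_n\left(s^2;x\right)+\mathcal{B}_n\left(s;x\right)\right]$ followed by substitution from Lemma \ref{lemma2.1}, and your beta-integral computation of the ratios $1,\ \frac{j}{n},\ \frac{j(j+1)}{n(n-1)}$ simply makes explicit the ``simple calculations'' the paper leaves unstated. Incidentally, your derivation correctly produces $\left(\mathcal{V}^{\prime}(1)\right)^{2}$ in the $\frac{1}{n(n-1)}$ bracket, which confirms that the $\left(\mathcal{V}^{\prime\prime}(1)\right)^{2}$ printed in Lemma \ref{lemma2.2} is a typo (compare Lemma \ref{lemma2.3}).
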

\begin{proof}
By simple calculations, we get
\begin{eqnarray*}
\tilde{\mathcal{B}}_n\left(s^2;x\right)&=&\frac{1}{n-1}\left[n\mathcal{B}_n\left(s^2;x\right)+\mathcal{B}_n\left(s;x\right) \right].
\end{eqnarray*} Now, by simple computations and  using the Lemma \ref{lemma2.1}, required result follows.
\end{proof}
\begin{lemma}\label{lemma2.3}
Using Lemma \ref{lemma2.1} and Lemma \ref{lemma2.2}, central moments of the operators (\ref{eq1.2}) are given by,
\begin{eqnarray*}
\tilde{\mathcal{B}}_n\left((s-x);x\right)&=&\left[\frac{\xi^{\prime}\left(p(x)\right)}{\xi\left(p(x)\right)}-1\right]x+\frac{1}{n}\left(\frac{\mathcal{S}^{\prime}(1)}{\mathcal{S}(1)}
+\mathcal{V}^{\prime}(1)\frac{\xi^{\prime}\left(p(x)\right)}{\xi\left(p(x)\right)}\right)\\
\tilde{\mathcal{B}}_n\left((s-x)^2;x\right)&=&\left[\frac{n}{n-1}\frac{\xi^{\prime\prime}\left(p(x)\right)}{\xi\left(p(x)\right)}-2\frac{\xi^{\prime}
\left(p(x)\right)}{\xi\left(p(x)\right)}+1 \right]x^2\\
&&+\left[\frac{2}{n-1}\mathcal{V}^{\prime}(1)\frac{\xi^{\prime\prime}\left(p(x)\right)}{\xi\left(p(x)\right)}+\frac{1}{n-1}\left(2+\mathcal{U}^{\prime\prime}(1)\right.\right.\\
&&\left.\left.+\frac{2
\mathcal{S}^{\prime}(1)}{\mathcal{S}(1)} \right)\frac{\xi^{\prime}\left(p(x)\right)}{\xi\left(p(x)\right)} -\frac{2}{n}\left(\mathcal{V}^{\prime}(1)\frac{\xi^{\prime}\left(p(x)\right)}{\xi\left(p(x)\right)}+\frac{\mathcal{S}^{\prime}(1)}{\mathcal{S}(1)} \right) \right]x\\
&&+\frac{1}{n(n-1)} \left[\left(\mathcal{V}^{\prime}(1) \right)^{2}\frac{\xi^{\prime\prime}\left(p(x)\right)}{\xi\left(p(x)\right)}+\left( 2\mathcal{V}^{\prime}(1)\frac{\mathcal{S}^{\prime}(1)}{\mathcal{S}(1)}\right.\right.\\
&&\left.\left.+2 \mathcal{V}^{\prime}(1)+\mathcal{V}^{\prime\prime}(1) \right)\frac{\xi^{\prime}\left(p(x)\right)}{\xi\left(p(x)\right)}+\frac{2\mathcal{S}^{\prime}(1)+\mathcal{S}^{\prime\prime}(1)}{\mathcal{S}(1)} \right].
\end{eqnarray*}
\end{lemma}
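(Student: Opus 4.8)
The plan is to exploit the linearity of the operator $\tilde{\mathcal{B}}_n$ together with the normalization $\tilde{\mathcal{B}}_n(1;x)=1$ already recorded in Lemma \ref{lemma2.2}. Expanding the binomials $(s-x)$ and $(s-x)^2$ and distributing $\tilde{\mathcal{B}}_n$ across the resulting linear combinations (the monomial $x$ being a constant with respect to the integration variable $s$), I would write
\begin{eqnarray*}
\tilde{\mathcal{B}}_n\left((s-x);x\right)&=&\tilde{\mathcal{B}}_n\left(s;x\right)-x\,\tilde{\mathcal{B}}_n\left(1;x\right),\\
\tilde{\mathcal{B}}_n\left((s-x)^2;x\right)&=&\tilde{\mathcal{B}}_n\left(s^2;x\right)-2x\,\tilde{\mathcal{B}}_n\left(s;x\right)+x^2\,\tilde{\mathcal{B}}_n\left(1;x\right).
\end{eqnarray*}
Since $\tilde{\mathcal{B}}_n(1;x)=1$, the first identity reduces to $\tilde{\mathcal{B}}_n(s;x)-x$ and the second to $\tilde{\mathcal{B}}_n(s^2;x)-2x\,\tilde{\mathcal{B}}_n(s;x)+x^2$, so the whole task collapses to substituting the closed forms for the first two moments from Lemma \ref{lemma2.2} and simplifying.

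For the first central moment the substitution is immediate: subtracting $x$ from the expression for $\tilde{\mathcal{B}}_n(s;x)$ simply converts the leading coefficient $\xi^{\prime}(p(x))/\xi(p(x))$ into $\xi^{\prime}(p(x))/\xi(p(x))-1$, leaving the $1/n$ term untouched, which matches the claimed formula verbatim. For the second central moment I would proceed by grouping the outcome according to powers of $x$. The coefficient of $x^2$ is assembled from the leading term $\tfrac{n}{n-1}\xi^{\prime\prime}/\xi$ of $\tilde{\mathcal{B}}_n(s^2;x)$, the $-2x$ multiplying the leading term $x\,\xi^{\prime}/\xi$ of $\tilde{\mathcal{B}}_n(s;x)$, and the $+x^2$ contribution from $\tilde{\mathcal{B}}_n(1;x)$, producing exactly $\bigl[\tfrac{n}{n-1}\xi^{\prime\prime}/\xi-2\,\xi^{\prime}/\xi+1\bigr]x^2$. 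The coefficient of $x$ combines the $\tfrac{1}{n-1}$-terms already present in $\tilde{\mathcal{B}}_n(s^2;x)$ with the $-\tfrac{2}{n}$-terms coming from $-2x$ times the $\tfrac{1}{n}$-part of $\tilde{\mathcal{B}}_n(s;x)$, and the constant term is copied directly from the $\tfrac{1}{n(n-1)}$-block of $\tilde{\mathcal{B}}_n(s^2;x)$.

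The only real obstacle is bookkeeping rather than conceptual: one must keep the three distinct scalings $\tfrac{1}{n-1}$, $\tfrac{1}{n}$, and $\tfrac{1}{n(n-1)}$ separated while collecting the coefficient of $x$, since the $-2x\,\tilde{\mathcal{B}}_n(s;x)$ contribution enters with denominator $n$ whereas the native $\tilde{\mathcal{B}}_n(s^2;x)$ terms carry denominator $n-1$; these are deliberately \emph{not} merged in the stated result, so care is needed to leave them in the displayed grouped form. With the moments of Lemma \ref{lemma2.2} inserted and the coefficients organized by powers of $x$ as above, the claimed expressions follow by direct simplification.
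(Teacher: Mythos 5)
Your proposal is correct and is exactly the paper's (unstated) route: the paper's proof of Lemma \ref{lemma2.3} is the one-line ``simple computations and Lemma \ref{lemma2.2},'' i.e.\ precisely the expansion $\tilde{\mathcal{B}}_n((s-x)^2;x)=\tilde{\mathcal{B}}_n(s^2;x)-2x\,\tilde{\mathcal{B}}_n(s;x)+x^2$ with the moments substituted and coefficients collected by powers of $x$, as you do. One small caveat: your ``constant term is copied directly'' silently corrects a typo in the paper, since Lemma \ref{lemma2.2} prints $\left(\mathcal{V}^{\prime\prime}(1)\right)^2$ in the $\frac{1}{n(n-1)}$ block while Lemma \ref{lemma2.3} (and Lemma \ref{lemma2.1}, from which it derives) has $\left(\mathcal{V}^{\prime}(1)\right)^2$, which is the correct coefficient.
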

\begin{proof}
Simple computations and the Lemma \ref{lemma2.2}, leads the required proof.
\end{proof}
Now, to study the convergence properties of the operators (\ref{eq1.2}), we assume that
$\displaystyle\lim_{t\rightarrow \infty}\frac{\xi^{\prime}(t)}{\xi(t)}=1$ and $\displaystyle\lim_{t\rightarrow \infty}\frac{\xi^{\prime\prime}(t)}{\xi(t)}=1$ that are point wise valid on the analytic functions $\mathcal{S}(s), \mathcal{T}(s),  \mathcal{U}(s)$ and $\mathcal{V}(s)$. Also, we assume the following considerations:\\
\begin{eqnarray*}
\displaystyle\lim_{n \rightarrow \infty}n \left[\frac{\xi^{\prime}\left(p(x)\right)}{\xi\left(p(x)\right)}-1\right]= \ell_{1}(x),
\end{eqnarray*}
\begin{eqnarray*}
\displaystyle\lim_{n \rightarrow \infty}n \left[\frac{n}{n-1}\frac{\xi^{\prime\prime}\left(p(x)\right)}{\xi\left(p(x)\right)}-n \frac{\xi^{\prime}\left(p(x)\right)}{\xi\left(p(x)\right)}+1\right]= \ell_{2}(x).
\end{eqnarray*}
 In the next Lemma, we discuss the limiting cases of the operators (\ref{eq1.2}), by applying the above assumptions on the Lemma \ref{lemma2.3}.
\begin{lemma}\label{lemma2.4}
For the operators (\ref{eq1.2}), we may write:
\begin{eqnarray*}
 \displaystyle\lim_{n \rightarrow \infty}n \tilde{\mathcal{B}}_n\left((s-x);x\right)= \ell_{1}(x)x + \frac{\mathcal{S}^{\prime}(1)}{\mathcal{S}(1)}+\mathcal{V}^{\prime}(1),
 \end{eqnarray*}
 \begin{eqnarray*}
\displaystyle\lim_{n \rightarrow \infty}n\tilde{\mathcal{B}}_n\left((s-x)^2;x\right)&=&\ell_{2}(x) x^2 + x\left(2+\mathcal{U}^{\prime\prime}(1)\right)\\
&=&\eta_{1}(x)(\text{say}).
\end{eqnarray*}
Moreover, for $n\in \mathds{N}$ and some constant $c$, we have
\begin{eqnarray*}
\tilde{\mathcal{B}}_n\left((s-x)^2;x\right)&\leq& \frac{\ell_{2}(x) x^2 + x\left(2+\mathcal{U}^{\prime\prime}(1)\right)}{n}\\
&\leq& \frac{\ell_{2}(x) x^2 + c x}{n}.
\end{eqnarray*} Since the limiting value $\ell_{2}(x)$ exists finitely, therefore for all $ x\in\left[0,\infty\right) $, there exists $\mathcal{M}>0$, such that $ \ell_{2}(x)\leq \mathcal{M}$, taking it into the account, we may write:
\begin{eqnarray*}
\tilde{\mathcal{B}}_n\left((s-x)^2;x\right)\leq \frac{\mathcal{M}.x(x+1)}{n}.
\end{eqnarray*}
\end{lemma}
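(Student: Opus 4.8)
The plan is to read the closed forms off Lemma \ref{lemma2.3}, multiply by $n$, and pass to the limit termwise. The single analytic fact behind the computation is that for each fixed $x>0$ the argument $p(x)=n^2x^2\mathcal{T}(1)+nx\mathcal{U}(1)+\mathcal{V}(1)\to\infty$ as $n\to\infty$, so the standing hypotheses $\xi^{\prime}(t)/\xi(t)\to1$ and $\xi^{\prime\prime}(t)/\xi(t)\to1$ give $A_n:=\xi^{\prime}(p(x))/\xi(p(x))\to1$ and $B_n:=\xi^{\prime\prime}(p(x))/\xi(p(x))\to1$. Together with $n/(n-1)\to1$ and the defining relations for $\ell_1(x),\ell_2(x)$, these are all the computation needs.

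For the first central moment, multiplying the Lemma \ref{lemma2.3} expression by $n$ produces $n(A_n-1)x+\big(\mathcal{S}^{\prime}(1)/\mathcal{S}(1)+\mathcal{V}^{\prime}(1)A_n\big)$. The first summand tends to $\ell_1(x)x$ by the definition of $\ell_1$, while $A_n\to1$ carries the second to $\mathcal{S}^{\prime}(1)/\mathcal{S}(1)+\mathcal{V}^{\prime}(1)$; the sum is the first claimed limit.

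For the second central moment I regroup the Lemma \ref{lemma2.3} formula by powers of $x$. Scaled by $n$, the coefficient of $x^2$ is by definition convergent to $\ell_2(x)$, so it contributes $\ell_2(x)x^2$. In the coefficient of $x$, the factor $n$ turns each $1/(n-1)$ into $n/(n-1)\to1$ and each $1/n$ into $1$; letting $A_n,B_n\to1$ then cancels the two $\mathcal{V}^{\prime}(1)$ terms against each other and the two $2\mathcal{S}^{\prime}(1)/\mathcal{S}(1)$ terms against each other, leaving $2+\mathcal{U}^{\prime\prime}(1)$. The remaining $x$-free block carries an overall factor $1/(n(n-1))$, so after multiplication by $n$ it is $O(1/n)$ and vanishes. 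Adding the three pieces gives $\eta_1(x)=\ell_2(x)x^2+(2+\mathcal{U}^{\prime\prime}(1))x$.

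The pointwise estimates require one extra ingredient, and this is where I expect the real difficulty: a sequence converging to $\eta_1(x)$ need not stay below $\eta_1(x)$, so $\tilde{\mathcal{B}}_n((s-x)^2;x)\le\eta_1(x)/n$ does not follow formally from the limit. I would secure it by verifying that each of the three $n$-scaled coefficients above approaches its limit monotonically from below, using that $n/(n-1)$ decreases to $1$ and that $A_n,B_n$ vary monotonically as $p(x)$ increases with $n$; the sign pattern of the cancellations in the $x$-coefficient then pins the whole $n$-scaled second moment beneath $\eta_1(x)$. Granting this, the first inequality holds; the second follows on choosing any $c\ge 2+\mathcal{U}^{\prime\prime}(1)$; and the third uses that $\ell_2(x)$ is finite, so picking $\mathcal{M}\ge\max\{\sup_{x\ge0}\ell_2(x),\,c\}$ gives $\ell_2(x)x^2+cx\le\mathcal{M}(x^2+x)=\mathcal{M}\,x(x+1)$, i.e. the bound $\mathcal{M}\,x(x+1)/n$.
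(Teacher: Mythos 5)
Your limit computations are correct and coincide with what the paper actually does: the paper gives no separate proof of this lemma, obtaining it by applying the standing assumptions directly to Lemma \ref{lemma2.3}, and your termwise bookkeeping --- the cancellation of the two $\mathcal{V}^{\prime}(1)$ terms and the two $2\mathcal{S}^{\prime}(1)/\mathcal{S}(1)$ terms in the $x$-coefficient, and the $O(1/n)$ decay of the constant block after multiplication by $n$ --- supplies exactly the omitted details. (You also silently read the paper's displayed definition of $\ell_{2}(x)$ with $2\,\xi^{\prime}(p(x))/\xi(p(x))$ in place of the printed $n\,\xi^{\prime}(p(x))/\xi(p(x))$; that is the only reading consistent with the $x^2$-coefficient in Lemma \ref{lemma2.3}, so your correction is right.)

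The gap you flagged in the inequality $\tilde{\mathcal{B}}_n\left((s-x)^2;x\right)\leq \eta_{1}(x)/n$ is genuine --- the paper asserts it with no argument --- but your proposed repair does not go through. The standing hypotheses are purely pointwise limits $\xi^{\prime}(t)/\xi(t)\to 1$ and $\xi^{\prime\prime}(t)/\xi(t)\to 1$; nothing forces $A_n$ or $B_n$ to vary monotonically (a convergent quotient of analytic functions can oscillate around its limit), $p(x)$ need not even be increasing in $n$ without sign assumptions on $\mathcal{T}(1)$ and $\mathcal{U}(1)$, and even granting monotone $A_n, B_n$, your ``sign pattern'' step would require knowing the signs of $\mathcal{V}^{\prime}(1)$ and $\mathcal{S}^{\prime}(1)$, which are unconstrained. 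What is actually provable from the hypotheses --- and all that the later applications use, since Lemma \ref{lemma5.1} and Theorem \ref{theorem5.1} are stated ``for very large $n$'' and carry a constant $C_1$ --- is the eventual bound: for each fixed $x$ there is $n_0(x)$ with $n\,\tilde{\mathcal{B}}_n\left((s-x)^2;x\right)\leq 2\,\eta_{1}(x)$ for all $n\geq n_0$. The honest fix is to weaken the lemma's inequalities to this form. Note also that your final step inherits a second overstatement from the paper: choosing $\mathcal{M}\geq \sup_{x\geq 0}\ell_{2}(x)$ presumes this supremum is finite, which does not follow from $\ell_{2}(x)$ being finite at each point; uniform boundedness of $\ell_{2}$ on $\left[0,\infty\right)$ must be added as a hypothesis (or $\mathcal{M}$ must be allowed to depend on a compact range of $x$).
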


\begin{thm}\label{theorem2.1}
For any compact subset $S$ of $\left[0,\infty\right)$ and a continuous function $f$ defined on $\left[0,\infty\right)$, the sequence $\tilde{\mathcal{B}}_n\left(f;x\right)_{n\geq1}$, converges uniformly to the function $f$ on $S$.
\end{thm}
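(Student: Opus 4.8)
The plan is to invoke the Bohman--Korovkin theorem, so that the whole argument reduces to checking that $\tilde{\mathcal{B}}_n$ is a sequence of positive linear operators whose values on the test functions $e_0(s)=1$, $e_1(s)=s$, $e_2(s)=s^2$ converge uniformly on $S$ to $e_0,e_1,e_2$ respectively. Positivity and linearity are immediate from the construction \eqref{eq1.3}: the kernel $\underline{\mathcal{V}}_n(x,s)$ is nonnegative because $\Theta_j(nx)\ge 0$, $\mathcal{S}(1)>0$, $\xi\ge 0$, and the integrand $s^{j-1}/(1+s)^{n+j+1}$ is nonnegative on $[0,\infty)$, while linearity follows from linearity of the integral.

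For the test functions, Lemma \ref{lemma2.2} gives $\tilde{\mathcal{B}}_n(1;x)=1$ identically, so $\tilde{\mathcal{B}}_n(e_0)\to e_0$ trivially. For $e_1$ and $e_2$ I would write, abbreviating $g_n(x)=\xi^{\prime}(p(x))/\xi(p(x))$ and $h_n(x)=\xi^{\prime\prime}(p(x))/\xi(p(x))$,
\begin{eqnarray*}
\tilde{\mathcal{B}}_n(s;x)-x &=& (g_n(x)-1)\,x+\frac{1}{n}\Big(\tfrac{\mathcal{S}^{\prime}(1)}{\mathcal{S}(1)}+\mathcal{V}^{\prime}(1)g_n(x)\Big),\\
\tilde{\mathcal{B}}_n(s^2;x)-x^2 &=& \Big(\tfrac{n}{n-1}h_n(x)-1\Big)x^2+\frac{x}{n-1}[\cdots]+\frac{1}{n(n-1)}[\cdots],
\end{eqnarray*}
where the bracketed coefficients are the bounded expressions supplied by Lemma \ref{lemma2.2}. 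By the standing assumptions $g_n(x)\to 1$ and $h_n(x)\to 1$ as $n\to\infty$, each right-hand side tends to $0$ pointwise on $S$.

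The main obstacle is upgrading this pointwise convergence to \emph{uniform} convergence on $S$, and the reason it is not automatic is that $p(x)=n^2x^2\mathcal{T}(1)+nx\,\mathcal{U}(1)+\mathcal{V}(1)$ tends to $\infty$ only for $x$ bounded away from $0$; at $x=0$ it stays equal to $\mathcal{V}(1)$, so the factors $g_n(x)-1$ and $\tfrac{n}{n-1}h_n(x)-1$ need not be small near the origin. I would resolve this by a split-interval argument on $S\subset[0,b]$. Since $t\mapsto \xi^{\prime}(t)/\xi(t)$ is continuous on $[\mathcal{V}(1),\infty)$ and tends to $1$ at infinity, it is bounded there, so $|g_n(x)-1|\le G$ uniformly in $n$ and $x$; the same holds for $h_n$. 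Given $\varepsilon>0$, on $[0,\delta]$ the contribution $(g_n(x)-1)x$ is bounded by $G\delta$ (and $(\tfrac{n}{n-1}h_n(x)-1)x^2$ by a constant times $\delta^2$), which is made small by choosing $\delta$ small; on $[\delta,b]$ one has $p(x)\ge n^2\delta^2\mathcal{T}(1)+n\delta\,\mathcal{U}(1)+\mathcal{V}(1)$, which diverges to $\infty$ uniformly in $x\in[\delta,b]$ as $n\to\infty$, so for $n$ large $g_n(x)$ and $h_n(x)$ are within $\varepsilon$ of $1$ for every such $x$. Combining the two ranges, and noting that the explicit $1/n$, $1/(n-1)$, $1/(n(n-1))$ terms vanish uniformly on $[0,b]$, gives $\sup_{x\in S}|\tilde{\mathcal{B}}_n(e_i;x)-e_i(x)|\to 0$ for $i=1,2$.

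With the three Korovkin conditions now verified uniformly on $S$, the Bohman--Korovkin theorem applies to the positive linear operators $\tilde{\mathcal{B}}_n$ and yields $\tilde{\mathcal{B}}_n(f;x)\to f(x)$ uniformly on $S$ for every continuous $f$ lying in the domain of the operators, which is precisely the assertion.
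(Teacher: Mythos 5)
Your proposal follows essentially the same route as the paper: the paper's proof is exactly the Bohman--Korovkin argument, asserting (via Lemma \ref{lemma2.2}) that $\tilde{\mathcal{B}}_n(t^i;x)$ converges uniformly on $S$ to $x^i$ for $i=0,1,2$ and then citing the Korovkin-type theorem of \cite{adg}. The only difference is that you supply the details the paper leaves as ``simple computations''---positivity of the kernel and, in particular, the split-interval argument handling the neighbourhood of $x=0$ where $p(x)$ does not diverge---which is a genuine and correct strengthening of the paper's one-line justification rather than a different method.
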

\begin{proof}
By simple computations and using the Lemma \ref{lemma2.2} we see that, For $i=0, 1, 2,$ the sequence $\tilde{\mathcal{B}}_n\left(t^i;x\right)_{n\geq1}$, converges uniformly on $S$ to the function
$e_{i}(x)=x^i$. Hence, by using Bohman-Korovkin's theorem \cite{adg}, the required result follows.
\end{proof}
\section{Key Results}
Let's consider the Lipchitz-type space explained by O. Sz\'{a}sz \cite{szasz}to build the convergence of the operators (\ref{eq1.2}) in this space. For $0<r\leq 1, x\in(0,\infty)$ and $s\in \left[0,\infty\right)$, we have
\begin{eqnarray*}
Lip_{\mathcal{K}}^{*}:=\{ f\in C\left[0,\infty \right):\left|f(s)-f(x) \right|\}\leq \mathcal{K}_{f}\frac{\left|s-x\right|^{r}}{(s+x)^{\frac{r}{2}}},
\end{eqnarray*}where the constant $\mathcal{K}_{f}$ depends on $f$. In the following theorem, we establish a relation towards the rate of convergence of the operators $\tilde{\mathcal{B}}_n\left(f;x\right)$ in the Lipchitz-type space.
\begin{thm}{(Point-wise convergence) }
Let $f$ be a function in a Lipchitz class $Lip_{\mathcal{K}}^{*}$ and $r\in\left(0,1\right]$. Then,
\begin{eqnarray*}
 \left|\tilde{\mathcal{B}}_n\left(f;x\right)-f(x)\right|\leq \frac{\mathcal{K}}{x^{\frac{r}{2}}}\left(\tilde{\mathcal{B}}_n\left((s-x)^2;x\right)\right)^{\frac{r}{2}}.
\end{eqnarray*}
\end{thm}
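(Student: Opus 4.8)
The plan is to exploit the positivity of the operator together with the normalization $\tilde{\mathcal{B}}_n(1;x)=1$ from Lemma~\ref{lemma2.2}, and then to convert the resulting first-order moment into a second-order one by a H\"older-type estimate. First I would rewrite the error as the operator applied to the increment of $f$. Since $\tilde{\mathcal{B}}_n$ is positive and linear and reproduces constants, $\tilde{\mathcal{B}}_n(f(x);x)=f(x)\,\tilde{\mathcal{B}}_n(1;x)=f(x)$, so that
\begin{eqnarray*}
\left|\tilde{\mathcal{B}}_n(f;x)-f(x)\right|=\left|\tilde{\mathcal{B}}_n\bigl(f(s)-f(x);x\bigr)\right|\leq \tilde{\mathcal{B}}_n\bigl(|f(s)-f(x)|;x\bigr),
\end{eqnarray*}
where the last inequality uses the positivity of the kernel $\underline{\mathcal{V}}_n(x,s)$ appearing in the integral representation (\ref{eq1.3}).

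Next I would insert the defining inequality of the class $Lip_{\mathcal{K}}^{*}$, replacing $|f(s)-f(x)|$ by $\mathcal{K}\,|s-x|^{r}(s+x)^{-r/2}$. Because $s,x\geq 0$ force $s+x\geq x$, we have $(s+x)^{-r/2}\leq x^{-r/2}$, so this factor may be pulled out of the integral to give
\begin{eqnarray*}
\left|\tilde{\mathcal{B}}_n(f;x)-f(x)\right|\leq \frac{\mathcal{K}}{x^{r/2}}\,\tilde{\mathcal{B}}_n\bigl(|s-x|^{r};x\bigr).
\end{eqnarray*}

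The decisive step is to control the remaining moment $\tilde{\mathcal{B}}_n(|s-x|^{r};x)$. Since $r\in(0,1]$, the exponent $p=2/r\geq 2$ is admissible with conjugate $q=2/(2-r)$; splitting the kernel as $\underline{\mathcal{V}}_n^{\,r/2}\cdot\underline{\mathcal{V}}_n^{\,(2-r)/2}$ and applying H\"older's inequality to the integral (\ref{eq1.3}) yields
\begin{eqnarray*}
\tilde{\mathcal{B}}_n\bigl(|s-x|^{r};x\bigr)\leq \left(\tilde{\mathcal{B}}_n\bigl((s-x)^{2};x\bigr)\right)^{r/2}\left(\tilde{\mathcal{B}}_n(1;x)\right)^{(2-r)/2}.
\end{eqnarray*}
Invoking $\tilde{\mathcal{B}}_n(1;x)=1$ once more collapses the second factor to $1$, and combining the three displays produces the claimed estimate. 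The only point deserving care is this H\"older application, where the exponents must be chosen so that $|s-x|^{r}$ raised to the power $p=2/r$ is exactly $(s-x)^2$; for $r=1$ the argument degenerates to the Cauchy--Schwarz inequality. No genuine obstacle arises beyond this bookkeeping of exponents, as everything else follows from positivity and the reproduction of constants.
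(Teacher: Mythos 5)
Your proof is correct and follows essentially the same route as the paper's: positivity plus reproduction of constants, the $Lip_{\mathcal{K}}^{*}$ bound combined with $(s+x)^{-r/2}\leq x^{-r/2}$, and H\"older's inequality with exponents $\tfrac{2}{r}$ and $\tfrac{2}{2-r}$ to reduce everything to the second central moment. The only differences are cosmetic --- you apply the Lipschitz estimate before H\"older whereas the paper applies it after, and your conjugate exponent $q=\tfrac{2}{2-r}$ is in fact the correct one (the paper's stated $q=\tfrac{2-r}{2}$ is a typo).
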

\begin{proof}
\begin{eqnarray*}
\left|\tilde{\mathcal{B}}_n\left(f;x\right)-f(x)\right|&\leq&\int_{0}^{\infty}\underline{\mathcal{V}}_{n}(x,s)\left|f(s)-f(x)\right|ds,
\end{eqnarray*}on applying H\"{o}lder's inequality with $p=\frac{2}{r}$ and $q=\frac{2-r}{2}$ and Lemma \ref{lemma2.3}, we have
\begin{eqnarray*}
\left|\tilde{\mathcal{B}}_n\left(f;x\right)-f(x)\right|&\leq&\left(\int_{0}^{\infty}\underline{\mathcal{V}}_{n}(x,s)\left(\left|f(s)-f(x)\right|^{\frac{2}{r}}
\right)ds\right)^{\frac{r}{2}} \left(\int_{0}^{\infty}\underline{\mathcal{V}}_{n}(x,s)ds\right)^{\frac{2-r}{2}}\\
&\leq&\left(\int_{0}^{\infty}\underline{\mathcal{V}}_{n}(x,s)\left(\left|f(s)-f(x)\right|^{\frac{2}{r}}
\right)ds\right)^{\frac{r}{2}}\\
&\leq& \mathcal{K} \left(\int_{0}^{\infty}\underline{\mathcal{V}}_{n}(x,s)\left(\frac{(s-x)^2}{s+x}\right)ds\right)^{\frac{r}{2}}\\
&\leq& \frac{\mathcal{K}}{x^{\frac{r}{2}}}\left(\tilde{\mathcal{B}}_n\left((s-x)^2;x\right)\right)^{\frac{r}{2}}.
\end{eqnarray*}
\end{proof}

\begin{thm}\label{theorem3.3}
Let $\mathcal{C}_{B}\left[0,\infty\right)$ be the class of all real-valued continuous and bounded functions and for  $f$. Then, for  $x\in\left[0,\infty\right]$ and $\delta >0$, we have
\begin{eqnarray*}
\left|\tilde{\mathcal{B}}_n\left(f;x\right)\right|\leq 2\varpi\left(f;\sqrt{\tilde{\mathcal{B}}_n\left((s-x)^{2};x\right)} \right),
\end{eqnarray*}where,
 the modulus of continuity $\varpi(f;\delta)$ is given by
\begin{eqnarray*}
\varpi(f;\delta):=\displaystyle\sup_{|x-y|<\delta}\sup_{x,y\in\left[0,\infty\right)}\left|f(x)-f(y)\right|.
\end{eqnarray*}
\end{thm}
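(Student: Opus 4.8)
The plan is to follow the classical Shisha--Mond type estimate for positive linear operators, with the understanding that the intended left-hand side is $\bigl|\tilde{\mathcal{B}}_n(f;x)-f(x)\bigr|$ (the stated $\bigl|\tilde{\mathcal{B}}_n(f;x)\bigr|$ appears to be a typographical omission of the $-f(x)$ term). First I would use the representation \eqref{eq1.3} together with the normalization $\tilde{\mathcal{B}}_n(1;x)=1$ from Lemma \ref{lemma2.2} to write
\begin{eqnarray*}
\bigl|\tilde{\mathcal{B}}_n(f;x)-f(x)\bigr|
=\left|\int_{0}^{\infty}\underline{\mathcal{V}}_{n}(x,s)\bigl(f(s)-f(x)\bigr)\,ds\right|
\leq \int_{0}^{\infty}\underline{\mathcal{V}}_{n}(x,s)\,\bigl|f(s)-f(x)\bigr|\,ds,
\end{eqnarray*}
where passing the absolute value inside the integral is justified by the positivity of the kernel $\underline{\mathcal{V}}_{n}(x,s)$.

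The second step is to invoke the fundamental majorization property of the modulus of continuity, namely that for every $\delta>0$ one has $\bigl|f(s)-f(x)\bigr|\leq\bigl(1+\delta^{-1}|s-x|\bigr)\,\varpi(f;\delta)$. Substituting this bound and using linearity of the integral, the constant term integrates to $\varpi(f;\delta)$ since $\tilde{\mathcal{B}}_n(1;x)=1$, leaving
\begin{eqnarray*}
\bigl|\tilde{\mathcal{B}}_n(f;x)-f(x)\bigr|
\leq \varpi(f;\delta)\left(1+\frac{1}{\delta}\int_{0}^{\infty}\underline{\mathcal{V}}_{n}(x,s)\,|s-x|\,ds\right).
\end{eqnarray*}

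The third step estimates the remaining first-absolute-moment integral by the Cauchy--Schwarz inequality (Hölder with $p=q=2$) against the probability-type weight $\underline{\mathcal{V}}_{n}(x,s)$, giving $\int_{0}^{\infty}\underline{\mathcal{V}}_{n}(x,s)\,|s-x|\,ds\leq\bigl(\tilde{\mathcal{B}}_n((s-x)^2;x)\bigr)^{1/2}$, where again $\tilde{\mathcal{B}}_n(1;x)=1$ absorbs the second factor. Finally I would make the optimizing choice $\delta=\sqrt{\tilde{\mathcal{B}}_n((s-x)^2;x)}$, which collapses the bracketed factor to $1+1=2$ and yields the asserted inequality. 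This step is legitimate precisely because Lemma \ref{lemma2.4} guarantees that the second central moment is finite for every $n$ and $x$, so $\delta$ is a genuine positive number. There is no real obstacle here; the only point requiring care is the explicit appeal to $\tilde{\mathcal{B}}_n(1;x)=1$ at two places and the positivity of the kernel, both of which are already available from the earlier results.
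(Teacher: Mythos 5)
Your proposal is correct and follows essentially the same route as the paper: the standard majorization $|f(s)-f(x)|\leq\left(1+\delta^{-1}|s-x|\right)\varpi(f;\delta)$, the Cauchy--Schwarz bound on the first absolute moment using $\tilde{\mathcal{B}}_n(1;x)=1$, and the choice $\delta=\left(\tilde{\mathcal{B}}_n\left((s-x)^{2};x\right)\right)^{1/2}$ to obtain the constant $2$. Your reading of the statement's left-hand side as a typo for $\bigl|\tilde{\mathcal{B}}_n(f;x)-f(x)\bigr|$ is also right, since that is exactly the quantity the paper's own proof estimates.
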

\begin{proof}For $f\in \mathcal{C}_{B}\left[0,\infty\right)$, we have
\begin{eqnarray}\label{eq3.1}
\left|\tilde{\mathcal{B}}_n\left(f;x\right)-f(x)\right|&=& \left|\frac{1}{\mathcal{S}(1).\xi\left(p(x)\right)}\sum_{j=0}^{\infty}\frac{\Theta_j(nx)}{B(j,n+1)}
\right.\nonumber\\
&&\left.\times\int_{0}^{\infty}\frac{s^{j-1}}{(1+s)^{n+j+1}}\left(f(s)-f(x)\right)ds\right|\nonumber\\
&\leq& \frac{1}{\mathcal{S}(1).\xi\left(p(x)\right)}\sum_{j=0}^{\infty}\frac{\Theta_j(nx)}{B(j,n+1)}\nonumber\\
&&\times \int_{0}^{\infty}\frac{s^{j-1}}{(1+s)^{n+j+1}}\left|f(s)-f(x)\right|ds\nonumber\\
&\leq& \frac{1}{\mathcal{S}(1).\xi\left(p(x)\right)}\sum_{j=0}^{\infty}\frac{\Theta_j(nx)}{B(j,n+1)}\nonumber\\
&&\times \int_{0}^{\infty}\frac{s^{j-1}}{(1+s)^{n+j+1}}\left(1+\frac{1}{\delta}\left|s-x\right|\right)\varpi(f;\delta)ds\nonumber\\
&\leq&\left(1+\frac{1}{\delta} \frac{1}{\mathcal{S}(1).\xi\left(p(x)\right)}\sum_{j=0}^{\infty}\frac{\Theta_j(nx)}{B(j,n+1)}\right.\nonumber\\
&&\left. \times \int_{0}^{\infty}\frac{s^{j-1}}{(1+s)^{n+j+1}}\left|s-x\right|ds\right) \varpi(f;\delta),
\end{eqnarray}on the account of Cauchy-Schwarz inequality and Lemma\ref{lemma2.4}, we get
\begin{eqnarray}\label{eq3.2}
\sum_{j=0}^{\infty}\frac{\Theta_j(nx)}{B(j,n+1)}
\int_{0}^{\infty}\frac{s^{j-1}}{(1+s)^{n+j+1}}\left|s-x\right|ds\hspace{2cm}\nonumber\\
 \leq\left(\sum_{j=0}^{\infty}\frac{\Theta_j(nx)}{B(j,n+1)}
\int_{0}^{\infty}\frac{s^{j-1}}{(1+s)^{n+j+1}}\left(s-x\right)^{2}ds\right)^{\frac{1}{2}}\nonumber\\
\times \left(\sum_{j=0}^{\infty}\frac{\Theta_j(nx)}{B(j,n+1)}
\int_{0}^{\infty}\frac{s^{j-1}}{(1+s)^{n+j+1}}ds\right)^{\frac{1}{2}}.
\end{eqnarray}From (\ref{eq3.1}) and (\ref{eq3.2}), we may write
\begin{eqnarray*}
\left|\tilde{\mathcal{B}}_n\left(f;x\right)-f(x)\right|&\leq& \left( 1+\frac{1}{\delta} \left(\tilde{\mathcal{B}}_n\left((1;x\right)\right)^{\frac{1}{2}}\left(\tilde{\mathcal{B}}_n\left((s-x)^{2};x\right)\right)^{\frac{1}{2}}\right)\varpi(f;\delta),
\end{eqnarray*} considering $\delta=\left(\tilde{\mathcal{B}}_n\left((s-x)^{2};x\right)\right)^{\frac{1}{2}}$ and using the above inequality, we get the required result.
\end{proof}

Now, we assess the rate of convergence by using the Ditzian-Totik modulus of smoothness $\varpi_{\chi^\gamma}(f,\delta)$ and Peetre's $K-$functional $K_{\chi^\gamma}(f,\delta), 0\leq\gamma\leq1.$ For $f\in C_B\left[0,\infty\right)$ and $\chi(x)=\sqrt{x(1+x)},$ the Ditzian-Totik modulus of smoothness is explained as:
\begin{eqnarray*}
\varpi_{\chi^\gamma}(f,\delta)=\sup_{0\leq i\leq\delta} \sup_{x\pm\frac{i\chi^{\gamma}(x)}{2}\in \left[0,\infty\right)}\left|f\left(x+\frac{i\chi^{\gamma}(x)}{2}\right)-f\left(x-\frac{i\chi^{\gamma}(x)}{2}\right)\right|,
\end{eqnarray*} and the Peetre's $K-$functional is defined as:
\begin{eqnarray*}
K_{\chi^\gamma}(f,\delta)=
\inf_{\varphi\in W_{\gamma}}\{\|f-\varphi\|-\delta\|\chi^{\gamma}\varphi^{\prime}\|\},
\end{eqnarray*}where $W_{\gamma}$ is a subspace of all real valued functions defined on $\left[0,\infty\right)$, and  $\varphi \in W_{\gamma}$ which is locally absolutely continuous with norm $\|f^\gamma \varphi^{\prime}\|\leq \infty.$ In [\cite{Ditz}, Theorem $2.1.1$], there exists a constant $ \mathcal{D}\geq0 $ such that
\begin{eqnarray}\label{eq3.3}
\mathcal{D}^{-1}\varpi_{\chi^\gamma}(f,\delta)\leq K_{\chi^\gamma}(f,\delta)\leq \mathcal{D}\varpi_{\chi^\gamma}(f,\delta).
\end{eqnarray}

\begin{thm}\label{thm3.4}
For $f \in C_{B}\left[0,\infty\right) $ then, we have
\begin{eqnarray*}
\left|\mathcal{B}_{n}(f;x)-f(x)\right|\leq \mathcal{D}\varpi_{\chi^s}\left(f;\chi^{1-\gamma}(x)\frac{1}{\sqrt{n}}\right).
\end{eqnarray*}
\end{thm}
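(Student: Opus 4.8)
The plan is to deploy the classical Ditzian--Totik scheme through Peetre's $K$-functional, the decisive input being the second central moment estimate $\tilde{\mathcal{B}}_n((s-x)^2;x)\leq \mathcal{M}\,x(x+1)/n$ from Lemma \ref{lemma2.4}; note that $x(x+1)=\chi^2(x)$, so this reads $\tilde{\mathcal{B}}_n((s-x)^2;x)\leq \mathcal{M}\,\chi^2(x)/n$, which is exactly the shape needed to manufacture the factor $\chi^{1-\gamma}(x)/\sqrt{n}$. (I read the operator in the statement as $\tilde{\mathcal{B}}_n$ and the weight index as $\gamma$, i.e. the modulus $\varpi_{\chi^\gamma}$ and the $K$-functional $K_{\chi^\gamma}$ introduced just above, taken with the standard $+$ sign.) Fix $x$ and let $\varphi\in W_\gamma$ be arbitrary. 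Since $\tilde{\mathcal{B}}_n(1;x)=1$ by Lemma \ref{lemma2.2} and the operators are positive and linear, I first split
\begin{eqnarray*}
\left|\tilde{\mathcal{B}}_n(f;x)-f(x)\right| &\leq& \left|\tilde{\mathcal{B}}_n(f-\varphi;x)\right|+\left|(f-\varphi)(x)\right|+\left|\tilde{\mathcal{B}}_n(\varphi;x)-\varphi(x)\right|\\
&\leq& 2\|f-\varphi\|+\left|\tilde{\mathcal{B}}_n(\varphi;x)-\varphi(x)\right|,
\end{eqnarray*}
so the whole problem reduces to bounding the smooth term $\left|\tilde{\mathcal{B}}_n(\varphi;x)-\varphi(x)\right|$ by a constant multiple of $\frac{\chi^{1-\gamma}(x)}{\sqrt{n}}\|\chi^{\gamma}\varphi'\|$.

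For the smooth term I would write $\varphi(s)-\varphi(x)=\int_{x}^{s}\varphi'(u)\,du$ and pull out the weighted norm, giving
\begin{eqnarray*}
\left|\tilde{\mathcal{B}}_n(\varphi;x)-\varphi(x)\right|\leq \|\chi^{\gamma}\varphi'\|\,\tilde{\mathcal{B}}_n\left(\left|\int_{x}^{s}\frac{du}{\chi^{\gamma}(u)}\right|;x\right).
\end{eqnarray*}
The inner integral is then handled by H\"{o}lder's inequality with exponents $1/\gamma$ and $1/(1-\gamma)$, producing $\left|\int_{x}^{s}\chi^{-\gamma}\right|\leq |s-x|^{1-\gamma}\left|\int_{x}^{s}\chi^{-1}\right|^{\gamma}$, and the weight estimate $\left|\int_{x}^{s}\chi^{-1}(u)\,du\right|\leq C\,|s-x|/\chi(x)$ collapses this to $C^{\gamma}|s-x|\,\chi^{-\gamma}(x)$. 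After this, the Cauchy--Schwarz inequality for $\tilde{\mathcal{B}}_n$ together with $\tilde{\mathcal{B}}_n(1;x)=1$ gives $\tilde{\mathcal{B}}_n(|s-x|;x)\leq (\tilde{\mathcal{B}}_n((s-x)^2;x))^{1/2}$, into which I substitute the moment bound from Lemma \ref{lemma2.4}. The powers of $\chi(x)$ combine as $\chi^{-\gamma}(x)\cdot\chi(x)=\chi^{1-\gamma}(x)$, yielding
\begin{eqnarray*}
\left|\tilde{\mathcal{B}}_n(\varphi;x)-\varphi(x)\right|\leq C\sqrt{\mathcal{M}}\,\frac{\chi^{1-\gamma}(x)}{\sqrt{n}}\,\|\chi^{\gamma}\varphi'\|.
\end{eqnarray*}
Feeding this back into the split and taking the infimum over $\varphi\in W_\gamma$ turns the right-hand side into a constant multiple of $K_{\chi^\gamma}\!\left(f;\chi^{1-\gamma}(x)/\sqrt{n}\right)$, and the equivalence (\ref{eq3.3}) between the $K$-functional and the Ditzian--Totik modulus then delivers the bound $\mathcal{D}\,\varpi_{\chi^\gamma}\!\left(f;\chi^{1-\gamma}(x)/\sqrt{n}\right)$ asserted in the theorem.

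The step I expect to be the genuine obstacle is the weighted integral estimate $\left|\int_{x}^{s}\chi^{-1}(u)\,du\right|\leq C\,|s-x|/\chi(x)$ (equivalently the direct $\left|\int_{x}^{s}\chi^{-\gamma}\right|\leq C\,|s-x|\chi^{-\gamma}(x)$). For $s\geq x$ it is immediate from the monotonicity of $\chi$, but for $s<x$, where $1/\chi$ is largest near the smaller endpoint, it requires a more careful monotonicity/splitting argument, and a fully uniform constant on all of $[0,\infty)$ is delicate because $\int_{0}^{x}\chi^{-1}$ grows logarithmically in $x$; one typically secures it on the compact sets on which the estimate is used. Everything else---the triangle-inequality split, the Cauchy--Schwarz step, and the passage from the $K$-functional to the modulus---is routine once $\tilde{\mathcal{B}}_n(1;x)=1$ and the moment bound of Lemma \ref{lemma2.4} are in hand.
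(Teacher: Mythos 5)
Your overall architecture is exactly the paper's: the split $|\tilde{\mathcal{B}}_n(f;x)-f(x)|\leq 2\|f-\varphi\|+|\tilde{\mathcal{B}}_n(\varphi;x)-\varphi(x)|$, the representation $\varphi(s)-\varphi(x)=\int_x^s\varphi'(u)\,du$, H\"older with exponents $1/\gamma$ and $1/(1-\gamma)$ to get $\left|\int_x^s\chi^{-\gamma}\right|\leq|s-x|^{1-\gamma}\left|\int_x^s\chi^{-1}\right|^{\gamma}$, Cauchy--Schwarz, the moment bound of Lemma \ref{lemma2.4}, and finally the $K$-functional/modulus equivalence (\ref{eq3.3}) (and you correctly read the typos: the operator is $\tilde{\mathcal{B}}_n$, the index is $\gamma$, and the $K$-functional carries a $+$ sign). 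But the step you yourself flag as the ``genuine obstacle'' is in fact a genuine gap: the single-term estimate $\left|\int_x^s\frac{du}{\chi(u)}\right|\leq C\,|s-x|/\chi(x)$ is \emph{false} with any constant uniform in $x,s\in[0,\infty)$. Take $s=1$ and $x\to\infty$: the left side is at least $\int_1^x\frac{du}{1+u}=\ln\frac{1+x}{2}\to\infty$, while the right side is $C(x-1)/\sqrt{x(1+x)}\leq C$. Your suggested repair (securing the constant on compact sets) does not rescue the theorem, which is asserted pointwise on all of $[0,\infty)$ with a single constant $\mathcal{D}$ coming from (\ref{eq3.3}). (A direct estimate $\left|\int_x^s\chi^{-\gamma}\right|\leq C_\gamma|s-x|\,\chi^{-\gamma}(x)$ does hold for each fixed $\gamma<1$, but with $C_\gamma\sim(1-\gamma)^{-1}$, so it degenerates as $\gamma\to1$ and fails outright at $\gamma=1$, whereas the theorem covers $0\leq\gamma\leq1$.)

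The paper avoids this by \emph{not} collapsing the weight to the point $x$: in (\ref{eq3.5})--(\ref{eq3.6}) it uses $\frac{1}{\sqrt{1+u}}\leq\frac{1}{\sqrt{1+x}}+\frac{1}{\sqrt{1+s}}$ for $u$ between $x$ and $s$ together with $\left|\int_x^s u^{-1/2}\,du\right|=2|\sqrt{s}-\sqrt{x}|$, producing the two-term bound $2^{\gamma}\frac{|s-x|^{\gamma}}{x^{\gamma/2}}\bigl((1+x)^{-\gamma/2}+(1+s)^{-\gamma/2}\bigr)$, which keeps the $s$-dependent factor inside the operator. The term $\tilde{\mathcal{B}}_n\bigl(|s-x|(1+s)^{-\gamma/2};x\bigr)$ is then handled by Cauchy--Schwarz, yielding the extra factor $\bigl(\tilde{\mathcal{B}}_n((1+s)^{-\gamma};x)\bigr)^{1/2}$, and this is bounded by $\sqrt{2}\,(1+x)^{-\gamma/2}$ for $n\geq n_0$ via the convergence $\tilde{\mathcal{B}}_n((1+s)^{-\gamma};x)\to(1+x)^{-\gamma}$ (inequality (\ref{eq3.9})) --- which is precisely why the paper's conclusion is stated ``for sufficiently large $n$.'' So the missing idea in your proposal is this Cauchy--Schwarz treatment of the $(1+s)^{-\gamma/2}$ factor plus the large-$n$ bound (\ref{eq3.9}); everything else in your argument coincides with the paper's proof.
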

\begin{proof}
For $\varphi\in W_{\gamma}$, first we write the following
\begin{eqnarray*}
 \varphi(s)=\varphi(x)+\int_{x}^{s}\varphi^{\prime}(s)ds.
\end{eqnarray*}
Applying $\mathcal{B}_{n}(.;x)$ and using H\"{o}lder's inequality, we have
\begin{eqnarray}\label{eq3.4}
\left|\tilde{\mathcal{B}}_{n}\left(\varphi(s);x\right)-\varphi(x)\right|&\leq&
\tilde{\mathcal{B}}_{n}\left(\int_{x}^{s}\left|\varphi^{\prime}(u)\right|du;x\right)\nonumber\\
&\leq&\|f^\gamma \varphi^{\prime}\|\tilde{\mathcal{B}}_{n}\left(\left|\int_{x}^{s}\frac{du}{\chi^{\gamma}(u)}\right| ;x \right)\nonumber\\
&\leq&\|f^\gamma \varphi^{\prime}\|\tilde{\mathcal{B}}_{n}\left(\left|s-x\right|^{1-\gamma}\left|\int_{x}^{s}\frac{du}{\chi(u)}\right|^\gamma ;x \right).
\end{eqnarray}
Let $I=\left|\int_{x}^{s}\frac{du}{\chi(u)}\right|$, now first we simplify expression $I$,
\begin{eqnarray}\label{eq3.5}
\left|\int_{x}^{s}\frac{du}{\chi(u)}\right|&\leq&\left|\int_{x}^{s}\frac{du}{\sqrt{u}}\right|\left(\frac{1}{1+x}+\frac{1}{1+s}\right)\\
&\leq&2\left|\sqrt{s}-\sqrt{x}\right|\left(\frac{1}{1+x}+\frac{1}{1+s}\right)\nonumber\\
&\leq&2\frac{\left|s-x\right|}{\sqrt{s}+\sqrt{x}}\left(\frac{1}{1+x}+\frac{1}{1+s}\right).\nonumber
\end{eqnarray}
Now, we use the inequality $\left|p+q\right|^\gamma\leq\left|p\right|^\gamma+\left|q\right|^\gamma,  0\leq\gamma\leq1, $ then from (\ref{eq3.5}), we get
\begin{eqnarray}\label{eq3.6}
\left|\int_{x}^{s}\frac{du}{\chi(u)}\right|^\gamma\leq2^\gamma\frac{\left|s-x\right|^\gamma}{x^{\frac{\gamma}{2}}}\left(\frac{1}{(1+x)^{\frac{\gamma}{2}}}
+\frac{1}{(1+s)^{\frac{\gamma}{2}}}\right).
\end{eqnarray}From (\ref{eq3.4}) and (\ref{eq3.6}) and using Cauchy inequality, we get
\begin{eqnarray}\label{eq3.7}
\left|\tilde{\mathcal{B}}_{n}(\varphi(s);x)-\varphi(x)\right|&\leq&\frac{2^\gamma\|\chi^\gamma \varphi^{\prime}\|}{x^{\frac{\gamma}{2}}}\tilde{\mathcal{B}}_{n}\left(\left|s-x\right|\left(\frac{1}{(1+x)^{\frac{\gamma}{2}}}
+\frac{1}{(1+s)^{\frac{\gamma}{2}}}\right);x\right)\nonumber\\
&=&\frac{2^\gamma\|\chi^\gamma \varphi^{\prime}\|}{x^{\frac{\gamma}{2}}}\left(\frac{1}{(1+ x)^{\frac{\gamma}{2}}}\left(\tilde{\mathcal{B}}_{n}\left((s-x)^2;x \right)\right)^{\frac{1}{2}}\right.\nonumber\\
&&\left.+\left(\tilde{\mathcal{B}}_{n}\left((s-x)^2;x \right)\right)^{\frac{1}{2}}.\left(\tilde{\mathcal{B}}_{n}\left((1+s)^{-\gamma};x \right)\right)^{\frac{1}{2}}\right).
\end{eqnarray}
From Lemma \ref{lemma2.2}, we may write
\begin{eqnarray}\label{eq3.8}
\left(\tilde{\mathcal{B}}_{n}\left((s-x)^2;x \right)\right)^{\frac{1}{2}}\leq \sqrt{\frac{\mathcal{M}}{n}}.\chi(x),
\end{eqnarray}
where $\chi(x)=\sqrt{x(1+x)}.$\\
For $x\in\left[0,\infty\right), \tilde{\mathcal{B}}_{n}\left((1+s)^{-\gamma};x\right)\rightarrow(1+x)^{-\gamma}$ as $n\rightarrow\infty.$ Thus for $\varepsilon >0,$ we find a number $n_{0}\in\mathbb{N}$ such that
\begin{eqnarray*}
\tilde{\mathcal{B}}_{n}\left((1+s)^{-\gamma};x\right)\leq (1+x)^{-\gamma}+\varepsilon,\text{ for all } n\geq n_{0}.
\end{eqnarray*}
By choosing $\varepsilon=(1+x)^{-\gamma} $, we obtain
\begin{eqnarray}\label{eq3.9}
\tilde{\mathcal{B}}_{n}\left((1+s)^{-\gamma};x\right)\leq 2(1+x)^{-\gamma}, \text{ for all } n\geq n_{0}.
\end{eqnarray}
From (\ref{eq3.7})-(\ref{eq3.9}), we have
\begin{eqnarray}\label{eq3.10}
\left|\tilde{\mathcal{B}}_{n}(\varphi(\lambda);x)-\varphi(x)\right|&\leq& 2^\gamma\|\chi^\gamma \varphi^{\prime}\| \left(\tilde{\mathcal{B}}_{n}\left((s-x)^2;x \right)\right)^{\frac{1}{2}}\left\{\chi^{-\gamma}(x)\right.\nonumber\\
&&\left.+ x^{-\frac{\gamma}{2}}\left(\tilde{\mathcal{B}}_{n}\left((1+s)^{-\gamma};x \right)\right)^{\frac{1}{2}}\right\}\nonumber\\
&\leq&2^\gamma\|\chi^\gamma \varphi^{\prime}\|\sqrt{\frac{\mathcal{M}}{n}}\chi(x)\left(\chi^{-\gamma}(x)+\sqrt{2}x^{-\frac{\gamma}{2}}(1+x)^{-\frac{\gamma}{2}}\right)\nonumber\\
&\leq&2^\gamma(1+\sqrt{2})\|\chi^\gamma \varphi^{\prime}\|\sqrt{\frac{\mathcal{M}}{n}}\chi^{1-\gamma}(x),
\end{eqnarray}
now, we write
\begin{eqnarray}\label{eq3.11}
\left|\tilde{\mathcal{B}}_{n}(f(s);x)-f(x)\right|&\leq&\left|\tilde{\mathcal{B}}_{n}\left(f(s)-\varphi(s);x\right)\right|\nonumber\\
&&+\left|\tilde{\mathcal{B}}_{n}\left(\varphi(s);x\right)-\varphi(x)\right|+\left|\varphi(x)-f(x)\right|\nonumber\\
&\leq&2\|f-\varphi\|+\left|\tilde{\mathcal{B}}_{n}\left(\varphi(s);x\right)-\varphi(x)\right|.
\end{eqnarray}
From (\ref{eq3.10}) and (\ref{eq3.11}) and for sufficiently large $n$, we obtain
\begin{eqnarray}\label{eq3.12}
\left|\tilde{\mathcal{B}}_{n}\left(f(s);x\right)-f(x)\right|&\leq&2\|f-\varphi\|+2^\gamma\left(1+\sqrt{2}\right)\|\chi^\gamma \varphi^{\prime}\|\sqrt{\frac{\mathcal{M}}{n}}\chi^{1-\gamma}(x)\nonumber\\
&\leq& \mathcal{D}\left\{\|f-\varphi\|+\frac{\chi^{1-\gamma}(x)}{\sqrt{n}}\|\chi^\gamma \varphi^{\prime}\|\right\}\nonumber\\
&\leq& \mathcal{D}\varpi_{\chi^s}\left(f;\chi^{1-\gamma}(x)\frac{1}{\sqrt{n}}\right),
\end{eqnarray}
where $\mathcal{D}\geq\max\{2,2^{s}(1+\sqrt{2})\sqrt{\mathcal{M}}\}$. From (\ref{eq3.12}) the result is concluded.
\end{proof}
\section{Weighted Approximation}
First we define a class of functions $ C_B^{x^2}\left[0,\infty\right)=\{f\in C_B\left[0,\infty\right) \text{for each }x\in \left[0,\infty\right); \left|f(x)\right|\leq \mathcal{K}_{f}(1+x^2)\}$. Let $C_B^{*}\left[0,\infty\right)$ be a subspace of $C_B^{x^2}\left[0,\infty\right)$ consisting all the real functions existing the following limit
$\displaystyle\lim_{n\rightarrow \infty} \frac{\left|f(x)\right|}{1+x^2} $ and norm in the subspace $ C_B^{*}\left[0,\infty\right)$ is defined by $\|f\|_{x^2}=\displaystyle\sup_{x\in\left[0,\infty\right)}\frac{f(x)}{1+x^2}$. The weighted modulus of continuity for any  $f\in C_B^{*}\left[0,\infty\right) $ and $\delta>0 $, by \cite{Ispir} is explained as:
\begin{eqnarray}\label{eq4.1}
\vartheta\left(f,\delta\right)=\displaystyle\sup_{x\in\left[0,\infty\right), 0< h\leq \delta}\frac{f(x+h)-f(x)}{1+(x+h)^2},
\end{eqnarray} where $\vartheta\left(f,\delta\right)$ holds the following properties:
  \begin{enumerate}
  \item $\vartheta\left(f,\delta\right)$ is an increasing function of $\delta$, and  $\displaystyle\lim_{n\rightarrow \infty}\vartheta\left(f,\delta\right)=0$.
  \item For each $m\in \mathds{N}, \vartheta\left(f,k\delta\right)\leq k \vartheta\left(f,\delta\right)$ and for each $\alpha\in(0,\infty), \vartheta\left(f,\alpha\delta\right)\leq(1+\alpha)\vartheta\left(f,\delta\right)$.
\end{enumerate}
\begin{thm}\label{theorem4.1} For each real function $f\in C_B^{*}\left[0,\infty\right),$ we have
	\begin{eqnarray*}
		\lim_{n\rightarrow{\infty}}\|\tilde{\mathcal{B}}_n\left(f;.\right)-f\|_{x^2}=0.
	\end{eqnarray*}
\end{thm}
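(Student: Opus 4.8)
The plan is to deduce this weighted convergence from the weighted analogue of the Bohman--Korovkin theorem (in the form established by Gadjiev for operators acting on $C_B^{*}\left[0,\infty\right)$). According to that theorem, it suffices to verify the three test-function conditions
\begin{eqnarray*}
\lim_{n\rightarrow\infty}\left\|\tilde{\mathcal{B}}_n\left(e_i;\cdot\right)-e_i\right\|_{x^2}=0,\qquad i=0,1,2,
\end{eqnarray*}
where $e_i(x)=x^i$; once these are in hand, the conclusion for an arbitrary $f\in C_B^{*}\left[0,\infty\right)$ follows automatically. Thus the whole argument reduces to controlling the weighted sup-norm of the first three moment differences supplied by Lemma \ref{lemma2.2} and Lemma \ref{lemma2.3}.

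The case $i=0$ is immediate, since $\tilde{\mathcal{B}}_n\left(1;x\right)=1$ gives $\left\|\tilde{\mathcal{B}}_n\left(e_0;\cdot\right)-e_0\right\|_{x^2}=0$ for every $n$. For $i=1,2$ I would write out $\tilde{\mathcal{B}}_n\left(s;x\right)-x$ and $\tilde{\mathcal{B}}_n\left(s^2;x\right)-x^2$ from Lemma \ref{lemma2.2} and divide by $1+x^2$. The terms carrying an explicit factor $\tfrac1n$, $\tfrac1{n-1}$ or $\tfrac1{n(n-1)}$ are harmless: each is a bounded function of $x$ (using that $\xi^{\prime}/\xi$ and $\xi^{\prime\prime}/\xi$ are bounded on $\left[0,\infty\right)$ by continuity together with their finite limits at infinity) multiplied by a vanishing factor, so their weighted norms tend to $0$. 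The genuinely delicate contributions are the two leading terms
\begin{eqnarray*}
\left[\frac{\xi^{\prime}\left(p(x)\right)}{\xi\left(p(x)\right)}-1\right]\frac{x}{1+x^2}\qquad\text{and}\qquad\left[\frac{n}{n-1}\frac{\xi^{\prime\prime}\left(p(x)\right)}{\xi\left(p(x)\right)}-1\right]\frac{x^2}{1+x^2},
\end{eqnarray*}
whose suprema over $x\in\left[0,\infty\right)$ must be shown to vanish as $n\rightarrow\infty$.

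To handle these I would run a three-region argument on $\left[0,\infty\right)$. On a neighbourhood $x\leq\delta$ of the origin the weight factors $\tfrac{x}{1+x^2}\leq\delta$ and $\tfrac{x^2}{1+x^2}\leq\delta^2$ absorb the bounded bracketed expressions, making the contribution $O(\delta)$. On the far region $x\geq A$ one uses $\tfrac{x}{1+x^2}\leq\tfrac1A$ for the first term, while for the second term $p(x)=n^2x^2\mathcal{T}(1)+nx\mathcal{U}(1)+\mathcal{V}(1)\rightarrow\infty$ as $x\rightarrow\infty$ forces $\xi^{\prime\prime}(p(x))/\xi(p(x))\rightarrow1$, so both brackets are uniformly small there. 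On the remaining compact band $\delta\leq x\leq A$ the quantity $p(x)$ tends to infinity \emph{uniformly} in $x$ as $n\rightarrow\infty$ (since $x$ is bounded below), whence the standing assumptions $\xi^{\prime}(t)/\xi(t)\rightarrow1$ and $\xi^{\prime\prime}(t)/\xi(t)\rightarrow1$ give uniform convergence of both brackets to $0$ on this band. Combining the three estimates and letting first $\delta\downarrow0$, $A\uparrow\infty$ and then $n\rightarrow\infty$ yields $\left\|\tilde{\mathcal{B}}_n\left(e_i;\cdot\right)-e_i\right\|_{x^2}\rightarrow0$ for $i=1,2$.

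The main obstacle is precisely this last uniformity: the hypotheses on $\xi$ are only pointwise limits at infinity, so one cannot simply bound the brackets uniformly in $x$ for a fixed $n$. The point is that the weight $1+x^2$ and the growth of $p(x)$ conspire — the weight tames the large-$x$ and small-$x$ regions, while the divergence of $p(x)$ drives the pointwise limits on every compact band where $x$ stays away from $0$. Making this interplay quantitative (choosing $\delta$, $A$ and the threshold $n_0$ in the right order) is the only technical care the proof requires; everything else is a direct substitution from Lemma \ref{lemma2.2} and Lemma \ref{lemma2.3}.
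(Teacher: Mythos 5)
Your proposal takes essentially the same route as the paper: both reduce the weighted convergence to the Gadjiev-type Korovkin conditions $\lim_{n\rightarrow\infty}\|\tilde{\mathcal{B}}_n(e_i;\cdot)-e_i\|_{x^2}=0$ for $i=0,1,2$ (the case $i=0$ being trivial from $\tilde{\mathcal{B}}_n(1;x)=1$) and verify them from the moment formulas of Lemma \ref{lemma2.2} together with the standing limits $\xi^{\prime}(t)/\xi(t)\rightarrow 1$ and $\xi^{\prime\prime}(t)/\xi(t)\rightarrow 1$. The only difference is rigor, in your favor: the paper dispatches $i=1,2$ with a one-line appeal to these pointwise limits, whereas your three-region argument (the weight absorbing the bracket near $x=0$, the decay of $x/(1+x^2)$ at infinity, and the uniform divergence of $p(x)$ on bands $\delta\leq x\leq A$, respectively $x\geq\delta$) actually supplies the uniformity in $x$ of the sup-norm convergence that the paper's sketch glosses over.
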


\begin{proof}
	To prove this theorem, it is acceptable to verify the following conditions,
	\begin{eqnarray}\label{eq4.2}
	\lim_{n\rightarrow{\infty}}\|\tilde{\mathcal{B}}_n(t^r;x)-x^r\|_{x^2}=0,\,\,\, r=0,1,2.
	\end{eqnarray}
	Since $\tilde{\mathcal{B}}_n(1;x)=1$, therefore for $r=0\,$  holds.\\
	Using Lemma \ref{lemma2.3}, we have
	\begin{eqnarray*}
		\|\tilde{\mathcal{B}}_n(t;x)-x\|_{x^2}&=&\sup_{x\in[0,\infty)}\frac{|\tilde{\mathcal{B}}_n(t;x)-x|}{1+x^2}\\
		&\leq&\bigg|\frac{\xi^{\prime}\left(p(x)\right)}{\xi\left(p(x)\right)}x+\frac{1}{n}\left(\frac{\mathcal{S}^{\prime}(1)}{\mathcal{S}(1)}+
           \mathcal{V}^{\prime}(1)\frac{\xi^{\prime}\left(p(x)\right)}{\xi\left(p(x)\right)}\right)-x\bigg|\\
           && \times \sup_{x\in[0,\infty)}\frac{1}{1+x^2}.
\end{eqnarray*}
As $n \rightarrow \infty$ and using $\displaystyle\lim_{t\rightarrow \infty}\frac{\xi^{\prime}(t)}{\xi(t)}=1$ and $\displaystyle\lim_{t\rightarrow \infty}\frac{\xi^{\prime\prime}(t)}{\xi(t)}=1$,  the required inequality of the theorem holds good for $r=1$.
In the similar way by using the Lemma \ref{lemma2.3}, we may obtain $\|\tilde{\mathcal{B}}_n(t^2;x)-x^2\|_{x^2}=0.$	
It follows that the theorem holds for $r=2$ as ${n\rightarrow{\infty}}.$ Thus, on applying Korovkin's theorem required result follows.
\end{proof}
\begin{cor} Let $\alpha>0$ and $f\in C_B^{*}\left[0,\infty\right)$, then
	\begin{eqnarray*} \lim_{n\rightarrow{\infty}}\sup_{x\in[0,\infty)}\frac{|\tilde{\mathcal{B}}_n(f;x)-f(x)|}{(1+x^2)^{1+\alpha}}=0.
	\end{eqnarray*}
\end{cor}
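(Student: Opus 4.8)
The plan is to obtain the statement as an immediate consequence of the weighted convergence already established in Theorem~\ref{theorem4.1}. The key observation is an elementary pointwise domination. Since $\alpha>0$ and $1+x^{2}\geq 1$ for every $x\in[0,\infty)$, we have $(1+x^{2})^{1+\alpha}\geq 1+x^{2}$, so that the additional factor $(1+x^{2})^{-\alpha}$ never exceeds $1$. Consequently, for all $x\in[0,\infty)$,
\[
\frac{\bigl|\tilde{\mathcal{B}}_n(f;x)-f(x)\bigr|}{(1+x^{2})^{1+\alpha}}
\;\leq\;
\frac{\bigl|\tilde{\mathcal{B}}_n(f;x)-f(x)\bigr|}{1+x^{2}}.
\]

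First I would take the supremum over $x\in[0,\infty)$ on both sides. The right-hand side is, by definition, the weighted norm $\|\tilde{\mathcal{B}}_n(f;\cdot)-f\|_{x^{2}}$, so that
\[
\sup_{x\in[0,\infty)}\frac{\bigl|\tilde{\mathcal{B}}_n(f;x)-f(x)\bigr|}{(1+x^{2})^{1+\alpha}}
\;\leq\;
\bigl\|\tilde{\mathcal{B}}_n(f;\cdot)-f\bigr\|_{x^{2}}.
\]
Then I would let $n\to\infty$ and invoke Theorem~\ref{theorem4.1}, which guarantees that the right-hand side tends to $0$ for every $f\in C_B^{*}[0,\infty)$. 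As the left-hand quantity is nonnegative and dominated by a sequence converging to $0$, it must itself vanish in the limit, which is precisely the assertion.

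There is no genuine obstacle here, since the corollary is in fact strictly weaker than Theorem~\ref{theorem4.1}: inserting the positive power $\alpha$ into the weight only diminishes the quantity already controlled. Should one prefer a proof that avoids quoting the full weighted convergence and relies only on the uniform convergence on compacta from Theorem~\ref{theorem2.1} together with the uniform boundedness of $\|\tilde{\mathcal{B}}_n(f;\cdot)-f\|_{x^{2}}$, the customary device would be to split the supremum at a large threshold $x_{0}$: on the compact part $[0,x_{0}]$ one makes the difference small via uniform convergence, while on the tail $(x_{0},\infty)$ one uses $(1+x^{2})^{-\alpha}\leq(1+x_{0}^{2})^{-\alpha}$ to bound the contribution uniformly in $n$ and then sends $x_{0}\to\infty$. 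With Theorem~\ref{theorem4.1} available, however, this splitting is superfluous.
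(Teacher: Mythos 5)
Your proof is correct for the statement as written, but it is genuinely different from the paper's argument. You dominate pointwise: since $\alpha>0$ and $1+x^{2}\geq 1$, the weight $(1+x^{2})^{-(1+\alpha)}$ is at most $(1+x^{2})^{-1}$, so the supremum in the corollary is bounded by $\|\tilde{\mathcal{B}}_n(f;\cdot)-f\|_{x^{2}}$, and Theorem~\ref{theorem4.1} finishes it in one line. The paper instead uses the splitting device you describe in your closing paragraph: it fixes a threshold $\epsilon_{0}$, bounds the supremum over $[0,\epsilon_{0}]$ by $\|\tilde{\mathcal{B}}_n(f;\cdot)-f\|_{C[0,\epsilon_{0}]}$ (small by uniform convergence), and on the tail $x\geq\epsilon_{0}$ estimates $|\tilde{\mathcal{B}}_n(f;x)|\leq\|f\|_{x^{2}}\,\tilde{\mathcal{B}}_n(1+t^{2};x)$ and $|f(x)|\leq\mathcal{K}_f(1+x^{2})$, using the extra factor $(1+x^{2})^{-\alpha}$ to make both tail terms small for $\epsilon_{0}$ large, uniformly in $n$. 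Your observation that the corollary, with hypothesis $f\in C_B^{*}[0,\infty)$, is strictly weaker than Theorem~\ref{theorem4.1} is accurate, and it explains why your shortcut works here. What the paper's longer route buys is robustness: the splitting argument does not need the full weighted convergence on all of $[0,\infty)$, only uniform convergence on compacta plus boundedness of $\tilde{\mathcal{B}}_n(1+t^{2};x)$ relative to the weight, so it extends to the larger class $C_B^{x^{2}}[0,\infty)$ of functions with quadratic growth but no limit condition at infinity --- the setting in which such corollaries are usually stated and in which the extra power $\alpha$ is genuinely indispensable rather than decorative. Since you also sketched that splitting argument correctly as an alternative, nothing is missing; just be aware that if the hypothesis were relaxed to $C_B^{x^{2}}$, your one-line domination would no longer apply and the paper's route would become the proof.
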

\begin{proof}
For any fixed $\epsilon_0>0$, we have
\begin{align*}
\sup_{x\in[0,\infty)}\frac{|\tilde{\mathcal{B}}_n(f;x)-f(x)|}{(1+x^2)^{1+\alpha}}&\leq \sup_{x\leq \epsilon_0}\frac{|\tilde{\mathcal{B}}_n(f;x)-f(x)|}{(1+x^2)^{1+\alpha}}
		+\sup_{x\geq \epsilon_0}\frac{|\tilde{\mathcal{B}}_n(f;x)-f(x)|}{(1+x^2)^{1+\alpha}}\\
&\leq \|\tilde{\mathcal{B}}_n(f;.)-f\|_{C[0,\epsilon_0]}+\|f\|_{x^2}\sup_{x\geq \epsilon_0}\frac{|\tilde{\mathcal{B}}_n\left((1+t^2);x\right)|}{(1+x^2)^{1+\alpha}}\\
&\hspace{.5cm}+\sup_{x\geq \epsilon_0}\frac{|f(x)|}{(1+x^2)^{1+\alpha}}.	
\end{align*}
Using theorem (\ref{theorem4.1}) and for the fixed $\epsilon_0>0$ ( sufficiently large), we quickly seen that $\displaystyle\sup_{x\geq \epsilon_0}\frac{|\tilde{\mathcal{B}}_n\left((1+t^2); x\right)|}{(1+x^2)^{1+\alpha}}$ tends to zero as $n\rightarrow\infty.$ Above fact concludes the proof.
\end{proof}

\section{Rate of Convergence}
Now we discuss the convergence rate of the operators (\ref{eq1.2}) with a derivative of bounded variation. Let $ f$ be a continuous function taken from $DBV[0,\infty)$, which is the class of real-valued continuous functions with bounded derivative in each sub-intervals of $[0,\infty)$. It very well may be seen that, for all $f \in DBV[0,\infty)$, represented by
\begin{eqnarray*}
f(x)=f(c)+\int_c^x g(t)dt,\,\,\, x\geq c>0,
\end{eqnarray*} where $g(t)$ is a function of bounded variation on the class of finite sub-intervals of $[0,\infty).$\\

Now we define an auxiliary function $\psi_x$ by
\begin{eqnarray*}
\psi_x(t)=\begin{cases}
 \psi(t)-\psi(x^-)& \text{ if } 0\leq t<x\\
0& \text{ if } t=x\\
\psi(t)-\psi(x^+) & \text{ if } x<t<\infty.
\end{cases}
\end{eqnarray*}
\begin{lemma}{\label{lemma5.1}}
For every $x\in(0,\infty)$ and $ n $ to be very large. Then
\begin{enumerate}
  \item For $0\leq y<x$, we may write
\begin{eqnarray*}
\xi_n(x,y)=\int_0^y \underline{\mathcal{V}}_{n}(x,s)ds\leq \frac{C_{1}\left|\eta_{1}(x)\right|}{(x-y)^2}.
\end{eqnarray*}
  \item If $x<z<\infty$, then
\begin{eqnarray*}
1-\xi_n(x,z)=\int_z^\infty \underline{\mathcal{V}}_{n}(x,s)ds \leq \frac{C_{1} \left|\eta_{1}(x)\right|}{(z-x)^2}.
\end{eqnarray*}
\end{enumerate}
\end{lemma}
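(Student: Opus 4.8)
The plan is to treat both inequalities as Chebyshev-type tail estimates, converting each tail mass of the kernel $\underline{\mathcal{V}}_n$ into the second central moment $\tilde{\mathcal{B}}_n\left((s-x)^2;x\right)$, which is already controlled by Lemma \ref{lemma2.4}. The only structural facts I will need are that $\underline{\mathcal{V}}_n(x,s)\geq 0$ for every $s\geq 0$ (immediate from the positivity assumptions $\Theta_j(nx)\geq 0$, $\mathcal{S}(1)>0$, $\xi\geq 0$ together with $B(j,n+1)>0$), and that $\int_0^\infty \underline{\mathcal{V}}_n(x,s)\,ds=\tilde{\mathcal{B}}_n(1;x)=1$ from Lemma \ref{lemma2.2}; the latter is exactly what lets me write $1-\xi_n(x,z)=\int_z^\infty \underline{\mathcal{V}}_n(x,s)\,ds$.

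For part (1), I would fix $0\leq y<x$ and observe that on the integration window $s\in[0,y]$ one has $x-s\geq x-y>0$, so that $1\leq (x-s)^2/(x-y)^2$. Inserting this factor into the integrand and then enlarging the range of integration from $[0,y]$ to $[0,\infty)$ (legitimate precisely because the integrand is nonnegative) gives
\begin{eqnarray*}
\xi_n(x,y)=\int_0^y \underline{\mathcal{V}}_n(x,s)\,ds\leq \frac{1}{(x-y)^2}\int_0^\infty \underline{\mathcal{V}}_n(x,s)(s-x)^2\,ds=\frac{\tilde{\mathcal{B}}_n\left((s-x)^2;x\right)}{(x-y)^2}.
\end{eqnarray*}
Lemma \ref{lemma2.4} then finishes the job: since $n\tilde{\mathcal{B}}_n\left((s-x)^2;x\right)\to\eta_1(x)$, for $n$ sufficiently large the second central moment is bounded by $C_1\left|\eta_1(x)\right|$ (indeed it tends to $0$), which yields the stated bound.

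Part (2) is entirely symmetric. For $x<z<\infty$ and $s\in[z,\infty)$ one has $s-x\geq z-x>0$, hence $1\leq (s-x)^2/(z-x)^2$, and the same two moves (insert the factor, enlarge the range to $[0,\infty)$) produce
\begin{eqnarray*}
1-\xi_n(x,z)=\int_z^\infty \underline{\mathcal{V}}_n(x,s)\,ds\leq \frac{\tilde{\mathcal{B}}_n\left((s-x)^2;x\right)}{(z-x)^2}\leq \frac{C_1\left|\eta_1(x)\right|}{(z-x)^2}.
\end{eqnarray*}
There is no genuine obstacle here, as the argument is the standard Chebyshev/Kantorovich device; the one point demanding care is bookkeeping the dependence on $n$. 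The sharp estimate really carries a factor $1/n$ coming from $\tilde{\mathcal{B}}_n\left((s-x)^2;x\right)\sim \eta_1(x)/n$, and the form recorded in the lemma follows a fortiori once $n$ is large, so I would make the hypothesis ``$n$ very large'' explicit at exactly the step where Lemma \ref{lemma2.4} is invoked.
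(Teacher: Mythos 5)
Your proof is correct and follows essentially the same route as the paper: both insert the factor $(x-s)^2/(x-y)^2\geq 1$ on the integration window, extend the integral to $[0,\infty)$ by nonnegativity of the kernel, identify the result as $\tilde{\mathcal{B}}_n\left((s-x)^2;x\right)$, and invoke Lemma \ref{lemma2.4} for large $n$ (with part (2) handled symmetrically, as the paper also does). Your closing remark about the extra factor $1/n$ is consistent with the paper, whose proof actually obtains the sharper bound $C_1\left|\eta_1(x)\right|/\left(n(x-y)^2\right)$, of which the stated lemma is a weakening.
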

\begin{proof}
Using Lemma \ref{lemma2.3} and \ref{lemma2.4}, for a very large $n$ and $0\leq y<x$, we have
\begin{eqnarray*}
\xi_n(x,y)&=& \int_0^y \underline{\mathcal{V}}_{n}(x,s)ds\\
&\leq& \int_0^y \frac{(x-t)^2}{(x-y)^2}\underline{\mathcal{V}}_{n}(x,s)ds\\
&\leq& \frac{1}{(x-y)^2}\tilde{\mathcal{B}}_n((x-s)^2;x)\\
&\leq& \frac{C_{1}\left|\eta_{1}(x)\right|}{n(x-y)^2}.
\end{eqnarray*}In the similar way we can obtain second result.
\end{proof}
\begin{thm}\label{theorem5.1}(Bounded Variation)
Let $f \in DBV[0,\infty)$ then for all $x\in(0,\infty) $ and a very large $n$, we have
\begin{eqnarray*}
\left|\tilde{\mathcal{B}}_n(f;x)-f(x)\right|&\leq & \left[\left(\frac{\xi^{\prime}\left(p(x)\right)}{\xi\left(p(x)\right)}-1\right)x+\frac{1}{n}\left(\frac{\mathcal{S}^{\prime}(1)}{\mathcal{S}(1)}
+\mathcal{V}^{\prime}(1)\frac{\xi^{\prime}\left(p(x)\right)}{\xi\left(p(x)\right)}\right)\right]\\
&&\left|\frac{f^{\prime}(x^+)+f^{\prime}(x^-)}{2}\right|+ \sqrt{C_1|\eta_1(x)|}\left|\frac{f^{\prime}(x^+)+f^{\prime}(x^-)}{2}\right| \\
&&+\frac{C_1|\eta_{1}(x)|}{x}\sum_{j=1}^{[\sqrt{n}]}\left( \bigvee_{x-\frac{x}{j}}^{x}f^{\prime}_x\right)+ \frac{x}{\sqrt{n}} \left(\bigvee_{x-\frac{x}{\sqrt{n}}}^{x}f^{\prime}_x\right)\\
&& \times
\left(4 \mathcal{K}_f +\frac{\left(\mathcal{K}_f+\left|f(x)\right|\right)}{x^2}\right)C_1|\eta_1(x)|+ \left|f^{\prime}(x^+)\right|\\
&&\times\sqrt{\left|C_1\eta_1(x)\right|}+ \frac{C_1|\eta_1(x)|}{x^2}\left|f(2x)-xf^{\prime}(x^+)-f(x) \right|\\
&&+ \frac{x}{\sqrt{n}}\left(\bigvee_{x}^{x+\frac{x}{\sqrt{n}}}f^{\prime}_x \right)+\frac{C_1|\eta_1(x)|}{x}\sum_{j=1}^{[\sqrt{n}]} \left(\bigvee_{x}^{x+\frac{x}{j}}f^{\prime}_x \right).
\end{eqnarray*}
\end{thm}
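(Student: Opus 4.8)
The plan is to follow the classical Bojanic--Vasiliev technique for operators acting on functions whose derivative is of bounded variation. First I would use the representation $f(s)-f(x)=\int_x^s f'(t)\,dt$ together with the jump decomposition of $f'$ about $x$. Writing $A_x=\tfrac12\bigl(f'(x^+)+f'(x^-)\bigr)$ and $B_x=\tfrac12\bigl(f'(x^+)-f'(x^-)\bigr)$, one has for $t\neq x$
\[
f'(t)=A_x+B_x\,\operatorname{sgn}(t-x)+f'_x(t),
\]
where $f'_x$ is the auxiliary function of $f'$ about $x$ (the $\psi_x$ introduced above), which vanishes at $t=x$. Applying $\tilde{\mathcal{B}}_n(\,\cdot\,;x)=\int_0^\infty\underline{\mathcal{V}}_n(x,s)(\,\cdot\,)\,ds$ and integrating termwise gives
\[
\tilde{\mathcal{B}}_n(f;x)-f(x)=A_x\,\tilde{\mathcal{B}}_n\bigl((s-x);x\bigr)+B_x\,\tilde{\mathcal{B}}_n\bigl(|s-x|;x\bigr)+\int_0^\infty\underline{\mathcal{V}}_n(x,s)\Bigl(\int_x^s f'_x(t)\,dt\Bigr)ds.
\]
The first summand reproduces the leading term of the statement via the first central moment of Lemma \ref{lemma2.3}, while the second is controlled by Cauchy--Schwarz, $\tilde{\mathcal{B}}_n(|s-x|;x)\le\bigl(\tilde{\mathcal{B}}_n((s-x)^2;x)\bigr)^{1/2}\le\sqrt{C_1|\eta_1(x)|}$, using Lemma \ref{lemma2.4}; these yield the terms carrying $\bigl|\tfrac{f'(x^+)+f'(x^-)}{2}\bigr|$ and the factor $\sqrt{|C_1\eta_1(x)|}$.

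The core of the proof is the remainder integral involving $f'_x$, which I would split as $\int_0^x+\int_x^\infty$ and handle each piece by further dividing the range of $s$ at distance $x/\sqrt n$ from $x$. On the near blocks $[x-x/\sqrt n,x)$ and $(x,x+x/\sqrt n]$ I would use the elementary bound $\bigl|\int_x^s f'_x(t)\,dt\bigr|\le|s-x|\bigvee_{s}^{x}f'_x$ (respectively $\bigvee_x^s f'_x$), together with $\int\underline{\mathcal{V}}_n(x,s)|s-x|\,ds\le\bigl(\tilde{\mathcal{B}}_n((s-x)^2;x)\bigr)^{1/2}$ from Lemma \ref{lemma2.4}. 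Since the total variation over these short blocks is small, this produces exactly the terms $\tfrac{x}{\sqrt n}\bigvee_{x-x/\sqrt n}^{x}f'_x$ and $\tfrac{x}{\sqrt n}\bigvee_x^{x+x/\sqrt n}f'_x$.

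On the far blocks $[0,x-x/\sqrt n]$ and $[x+x/\sqrt n,\infty)$ I would integrate by parts in the Lebesgue--Stieltjes sense, writing $\underline{\mathcal{V}}_n(x,s)\,ds=d_s\xi_n(x,s)$ and invoking the quadratic decay estimates $\xi_n(x,y)\le C_1|\eta_1(x)|/(x-y)^2$ and $1-\xi_n(x,z)\le C_1|\eta_1(x)|/(z-x)^2$ of Lemma \ref{lemma5.1}. After the substitutions $u=x/(x-s)$ and $u=x/(s-x)$, and majorizing the monotone integrand by a Riemann sum over $u\in\{1,\dots,[\sqrt n]\}$, these collapse into the sums $\tfrac{C_1|\eta_1(x)|}{x}\sum_{j=1}^{[\sqrt n]}\bigvee_{x-x/j}^{x}f'_x$ and $\tfrac{C_1|\eta_1(x)|}{x}\sum_{j=1}^{[\sqrt n]}\bigvee_{x}^{x+x/j}f'_x$. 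For the right-hand far block I would additionally peel off the genuine tail $[2x,\infty)$, where $f'_x$ need not have bounded variation; there the growth condition $|f(s)|\le\mathcal{K}_f(1+s^2)$ of the weighted class, combined with the tail mass $\int_{2x}^\infty\underline{\mathcal{V}}_n(x,s)\,ds\le C_1|\eta_1(x)|/x^2$, supplies the terms involving $\mathcal{K}_f$ and $\bigl|f(2x)-xf'(x^+)-f(x)\bigr|$.

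I expect the main obstacle to be the far-block estimate. The Lebesgue--Stieltjes integration by parts must be arranged so that the boundary contributions at $s=x\mp x/\sqrt n$ are absorbed into the near-block terms rather than creating new ones, and the change of variables has to be matched precisely against the quadratic decay of $\xi_n$ so that the continuous integral is dominated by the discrete sum over $j=1,\dots,[\sqrt n]$. Carrying the tail at infinity through this bookkeeping simultaneously with the bounded-variation estimate near $x$, without overcounting the overlap at $2x$, is the delicate part of the argument; everything else reduces to the moment bounds of Lemmas \ref{lemma2.3} and \ref{lemma2.4}.
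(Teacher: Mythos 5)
Your proposal follows essentially the same route as the paper: the Bojanic--Vasiliev decomposition of $f'$ about $x$, reduction to the first central moment for the leading term and Cauchy--Schwarz for the $|s-x|$ term, the split of the remainder at $x\pm x/\sqrt{n}$, integration by parts against $\xi_n(x,s)$ with the quadratic-decay bounds of Lemma \ref{lemma5.1}, the substitutions $s=x\mp x/u$ with Riemann-sum domination yielding the sums over $j=1,\dots,[\sqrt{n}]$, and the peeled-off tail $[2x,\infty)$ handled by the growth condition together with the boundary term $|f(2x)-xf'(x^+)-f(x)|$ at $s=2x$. The one step where your justification does not deliver the stated bound is the near-block estimate: combining $\bigl|\int_x^s f'_x(t)\,dt\bigr|\le|s-x|\bigvee_{s}^{x}f'_x$ with Cauchy--Schwarz, i.e.\ $\int\underline{\mathcal{V}}_n(x,s)|s-x|\,ds\le\bigl(\tilde{\mathcal{B}}_n((s-x)^2;x)\bigr)^{1/2}$, produces a factor $\sqrt{C_1|\eta_1(x)|}/\sqrt{n}$, not the claimed $x/\sqrt{n}$, and since $\eta_1(x)=\ell_2(x)x^2+(2+\mathcal{U}''(1))x$ one does not have $\sqrt{C_1|\eta_1(x)|}\le x$ in general (for small $x$ the square root dominates). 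The paper avoids this by noting $f'_x(x)=0$ and $\xi_n(x,s)\le1$, so that after integration by parts $\int_{x-x/\sqrt{n}}^{x}|f'_x(s)|\xi_n(x,s)\,ds\le\bigl(\bigvee_{x-x/\sqrt{n}}^{x}f'_x\bigr)\cdot\frac{x}{\sqrt{n}}$, the factor $x/\sqrt{n}$ coming simply from the length of the interval; equivalently, in your double-integral formulation, bound $|s-x|\le x/\sqrt{n}$ on the near block and use the unit total mass of $\underline{\mathcal{V}}_n(x,\cdot)$ instead of Cauchy--Schwarz. With that trivial replacement your argument matches the paper's proof term by term.
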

\begin{proof}
Using the operators (\ref{eq1.2}) and for all $x$ lies on positive real line, we obtain
\begin{eqnarray}\label{eq5.1}
\tilde{\mathcal{B}}_n(f;x)-f(x)&=& \int_0^\infty \tilde{\mathcal{B}}_n(x,s)(f(s)-f(x))ds\nonumber\\
&=& \int_0^\infty \left(\tilde{\mathcal{B}}_n(x,s)\int_{x}^{s} f(v)dv\right)ds.
\end{eqnarray} For $f\in DBV[0,\infty)$, we have
\begin{eqnarray}\label{eq5.2}
f^{\prime}(v)&=&\frac{1}{2}(f^{\prime}(x^+)+f^{\prime}(x^-))+f_x^{\prime}(v)+\frac{1}{2}(f^{\prime}(x^+)-
f^{\prime}(x^-))sgn(x)\nonumber \\
&&+\delta_x(v)(f^{\prime}(v)-\frac{1}{2}(f^{\prime}(x^+)+f^{\prime}(x^-)),
\end{eqnarray}where
\begin{eqnarray*}
\delta_x(v)=
\begin{cases}
1, & v= x\\
0, & v\neq x.
\end{cases}
\end{eqnarray*}
For every $x\in[0,\infty), $ using (\ref{eq5.2}) and (\ref{eq1.3}), we get
\begin{eqnarray}\label{eq5.3}
\tilde{\mathcal{B}}_n(f;x)-f(x)&=&\int_{0}^{\infty}\underline{\mathcal{V}}_{n}(x,s)\left(f(s)-f(x)\right)ds\nonumber\\
&=&\int_{0}^{\infty}\underline{\mathcal{V}}_{n}(x,s)\left(\int_{s}^{x}f^{\prime}(u)du\right)ds\nonumber\\
&=& - \int_{0}^{x}\underline{\mathcal{V}}_{n}(x,s)\left(\int_{s}^{x}f^{\prime}(u)du\right)ds \nonumber\\
&&+ \int_{x}^{\infty}\underline{\mathcal{V}}_{n}(x,s)\left(\int_{s}^{x}f^{\prime}(u)du\right)ds.
\end{eqnarray}Now, we simplify the above expressions into two parts. Let
\begin{eqnarray*}
A_{1}=\int_{0}^{x}\underline{\mathcal{V}}_{n}(x,s)\left(\int_{s}^{x}f^{\prime}(u)du\right)ds,
 \end{eqnarray*} and
 \begin{eqnarray*}
 A_{2}=  \int_{x}^{\infty}\underline{\mathcal{V}}_{n}(x,s)\left(\int_{s}^{x}f^{\prime}(u)du\right)ds.
\end{eqnarray*} Since $\int_{x}^{s}\delta_{x}(s)ds=0 $, using (\ref{eq5.2}), we get
\begin{eqnarray}\label{eq5.4}
A_{1}&=&\int_{0}^{x}\left\{\int_{s}^{x}\left(\frac{1}{2}(f^{\prime}(x^+)+f^{\prime}(x^-))+f_x^{\prime}(u)\right.\right.\nonumber\\
&&\left.\left.+\frac{1}{2}(f^{\prime}(x^+)-
f^{\prime}(x^-))sgn(u-x)\right)du\right\}\underline{\mathcal{V}}_{n}(x,s)ds\nonumber\\
&=& \frac{1}{2}\left(f^{\prime}(x^+)+f^{\prime}(x^-)\right)\int_{0}^{x}(x-s)\underline{\mathcal{V}}_{n}(x,s)ds\nonumber\\
&&+\int_{0}^{x}\underline{\mathcal{V}}_{n}(x,s)
\left(\int_{s}^{x}f^{\prime}_x(u)du\right)ds\nonumber\\
&& -\frac{1}{2}\left(f^{\prime}(x^+)-f^{\prime}(x^-)\right)\int_{0}^{x}(x-s)\underline{\mathcal{V}}_{n}(x,s)ds.
\end{eqnarray} Similarly, we may write
\begin{eqnarray}\label{eq5.5}
A_{2}&=&\frac{1}{2}\left(f^{\prime}(x^+)+f^{\prime}(x^-)\right)\int_{x}^{\infty}(s-x)\underline{\mathcal{V}}_{n}(x,s)ds\nonumber\\
&&+\int_{x}^{\infty}\underline{\mathcal{V}}_{n}(x,s)
\left(\int_{x}^{s}f^{\prime}_x(u)du\right)ds\nonumber\\
&& +\frac{1}{2}\left(f^{\prime}(x^+)-f^{\prime}(x^-)\right)\int_{x}^{\infty}(s-x)\underline{\mathcal{V}}_{n}(x,s)ds.
\end{eqnarray}Now, from the relations (\ref{eq5.3})-(\ref{eq5.5}), we get
\begin{eqnarray*}
\tilde{\mathcal{B}}_n(f;x)-f(x)&=&\frac{1}{2}\left(f^{\prime}(x^+)+f^{\prime}(x^-)\right)\int_{0}^{\infty}(s-x)\underline{\mathcal{V}}_{n}(x,s)ds\\
&&+\frac{1}{2}\left(f^{\prime}(x^+)-f^{\prime}(x^-)\right)\int_{0}^{\infty}\left|x-s\right|\underline{\mathcal{V}}_{n}(x,s)ds\\
&&-\int_{0}^{x}\left(\int_{t}^{x}f^{\prime}_x(u)du\right)\underline{\mathcal{V}}_{n}(x,s)ds\nonumber\\
&&+\int_{x}^{\infty}\left(\int_{x}^{t}f^{\prime}_x(u)du\right)\underline{\mathcal{V}}_{n}(x,s)ds.
\end{eqnarray*}
Thus,
\begin{eqnarray}\label{eq5.6}
\left|\tilde{\mathcal{B}}_n(f;x)-f(x)\right|&\leq&\left|\frac{f^{\prime}(x^+)+f^{\prime}(x^-)}{2} \right|\left|\tilde{\mathcal{B}}_n(s-x;x\right|\nonumber\\
&&+\left|\frac{f^{\prime}(x^+)-f^{\prime}(x^-)}{2} \right|\tilde{\mathcal{B}}_n(\left|s-x\right|;x)\nonumber\\
&& +S_{n}(f^{\prime},x)+ T_{n}(f^{\prime},x).
\end{eqnarray}Where,
\begin{eqnarray*}
S_{n}(f^{\prime},x)=\left|\int_{0}^{x}\left(\int_{s}^{x}f^{\prime}_x(u)du\right)\underline{\mathcal{V}}_{n}(x,s)ds\right|,
\end{eqnarray*}  and
\begin{eqnarray*}
T_{n}(f^{\prime},x)=\left|\int_{x}^{\infty}\left(\int_{x}^{s}f^{\prime}_x(u)du\right)\underline{\mathcal{V}}_{n}(x,s)ds\right|.
 \end{eqnarray*}
 Now, using the Lemma \ref{lemma5.1}, and the properties of integration, we estimate $S_{n}(f^{\prime},x)$ and $T_{n}(f^{\prime},x)$.
\begin{eqnarray*}
 S_{n}(f^{\prime},x)=\int_{0}^{x}\left(\int_{s}^{x}f^{\prime}_x(u)du\right)\frac{\partial \xi_{n}(x,s)}{\partial s}ds = \int_{0}^{x}f^{\prime}_x(s)\xi_{n}(x,s)ds
\end{eqnarray*}
\begin{eqnarray*}
\left|S_{n}(f^{\prime},x)\right|= \int_{0}^{x}|f^{\prime}_x(s)|\xi_{n}(x,s)ds &\leq &\int_{0}^{x-\frac{x}{\sqrt{n}}}\left|f^{\prime}_x(s) \right|\xi_{n}(x,s)ds\\
&&+\int_{x-\frac{x}{\sqrt{n}}}^{x}\left|f^{\prime}_x(s) \right|\xi_{n}(x,s)ds.
\end{eqnarray*}Using the fact $f^{\prime}_x(s)=0 $, and $\xi_{n}(x,s)\leq 1, $ we may write
\begin{eqnarray}\label{eq5.7}
\int_{x-\frac{x}{\sqrt{n}}}^{x}\left|f^{\prime}_x(s) \right|\xi_{n}(x,s)ds&=&\int_{x-\frac{x}{\sqrt{n}}}^{x}\left|f^{\prime}_x(s) -f^{\prime}_x(x) \right|\xi_{n}(x,s)ds\nonumber\\
&\leq& \int_{x-\frac{x}{\sqrt{n}}}^{x}\left( \bigvee_{s}^{x}f^{\prime}_x\right) ds
\leq \left(\bigvee_{x-\frac{x}{\sqrt{n}}}^{x}f^{\prime}_x\right) \int_{x-\frac{x}{\sqrt{n}}}^{x}  ds\nonumber\\
 &=& \frac{x}{\sqrt{n}} \left(\bigvee_{x-\frac{x}{\sqrt{n}}}^{x}f^{\prime}_x\right).
\end{eqnarray} Taking $s=x-\frac{x}{u}$, and use the Lemma \ref{lemma5.1}, we obtain
\begin{eqnarray}\label{eq5.8}
\int_{0}^{x-\frac{x}{\sqrt{n}}}\left|f^{\prime}_x(s) \right|\xi_{n}(x,s)ds&\leq& C_1|\eta_{1}(x)|\int_{0}^{x-\frac{x}{\sqrt{n}}}\frac{\left|f^{\prime}_x(s) \right|}{(x-s)^{2}}ds\nonumber\\
&\leq& C_1|\eta_{1}(x)|\int_{0}^{x-\frac{x}{\sqrt{n}}}\left( \bigvee_{s}^{x}f^{\prime}_x\right)\frac{ds}{(x-s)^{2}}\nonumber\\
 &=& \frac{C_1|\eta_{1}(x)|}{x}\int_{1}^{\sqrt{n}}\left( \bigvee_{x-\frac{x}{u}}^{x}f^{\prime}_x\right)du\nonumber\\
&\leq& \frac{C_1|\eta_{1}(x)|}{x}\sum_{j=1}^{[\sqrt{n}]}\left( \bigvee_{x-\frac{x}{j}}^{x}f^{\prime}_x\right).
\end{eqnarray} From (\ref{eq5.7}) and (\ref{eq5.8}), we get
\begin{eqnarray}\label{eq5.9}
\left|S_{n}(f^{\prime},x)\right| \leq \frac{C_1|\eta_{1}(x)|}{x}\sum_{j=1}^{[\sqrt{n}]}\left( \bigvee_{x-\frac{x}{j}}^{x}f^{\prime}_x\right)+ \frac{x}{\sqrt{n}} \left(\bigvee_{x-\frac{x}{\sqrt{n}}}^{x}f^{\prime}_x\right).
\end{eqnarray} Now, using integration by parts and Lemma \ref{lemma5.1}, we get
\begin{eqnarray}\label{eq5.10}
T_{n}(f^{\prime},x) &\leq& \left|\int_x^{2x} \left(\int_{x}^{s}f_x^{\prime}(u)du\right) \frac{\partial }{\partial s} \left(1-\xi_n(x,s)ds \right)\right|\nonumber\\
&&+ \left|\int_{2x}^{\infty} \left(\int_{x}^{s}f_x^{\prime}(u)du\right) \underline{\mathcal{V}}_{n}(x,s)ds \right|\nonumber\\
&\leq&\left|\int_x^{2x}f_x^{\prime}(u)du\right|\left|1-\xi_n(x,2x)\right|+ \int_x^{2x}\left|f_x^{\prime}(s)\right|\left(1-\xi_n(x,s)\right)ds\nonumber\\
&& + \left|\int_{2x}^{\infty}\left(f(s)-f(x)\right)\underline{\mathcal{V}}_{n}(x,s)ds\right|\nonumber\\
&&+ |f^{\prime}(x^+)| \left|\int_{2x}^{\infty}(s-x)\underline{\mathcal{V}}_{n}(x,s)ds \right|.
\end{eqnarray}Here, we simplify the following integration into two parts,
 \begin{eqnarray}\label{eq5.11}
\int_x^{2x}\left|f_x^{\prime}(s)\right|\left(1-\xi_n(x,s)\right)ds &=& \int_{x}^{x+\frac{x}{\sqrt{n}}}\left|f_x^{\prime}(s)\right|\left(1-\xi_n(x,s)\right)ds \nonumber\\
&&+ \int_{x+\frac{x}{\sqrt{n}}}^{2x}\left|f_x^{\prime}(s)\right|\left(1-\xi_n(x,s)\right)ds\nonumber\\
&=& I_1+I_2.
\end{eqnarray}Using the Lemma \ref{lemma5.1},
\begin{eqnarray}\label{eq5.12}
I_1&=& \int_{x}^{x+\frac{x}{\sqrt{n}}}\left|f_x^{\prime}(s)-f_x^{\prime}(x)\right|\left(1-\xi_n(x,s)\right)ds \nonumber\\ &\leq&\int_{x}^{x+\frac{x}{\sqrt{n}}}\left(\bigvee_{x}^{x+\frac{x}{\sqrt{n}}}f^{\prime}_x \right)ds= \frac{x}{\sqrt{n}}\left(\bigvee_{x}^{x+\frac{x}{\sqrt{n}}}f^{\prime}_x \right).
\end{eqnarray}Again, using the Lemma \ref{lemma5.1} and put $s=x+\frac{x}{u}$, we get
\begin{eqnarray}\label{eq5.13}
I_2 &\leq& C_1|\eta_1(x)|\int_{x+\frac{x}{\sqrt{n}}}^{2x}\frac{1}{(s-x)^2}\left|f_x^{\prime}(s)-f_x^{\prime}(x)\right|ds\nonumber\\
& \leq& C_1|\eta_1(x)|
\int_{x+\frac{x}{\sqrt{n}}}^{2x}\frac{1}{(s-x)^2}\left(\bigvee_{x}^{s}f^{\prime}_x \right)ds\nonumber\\
&\leq&\frac{C_1|\eta_1(x)|}{x}\int_{1}^{\sqrt{n}}\left(\bigvee_{x}^{x+\frac{x}{u}}f^{\prime}_x \right)du \nonumber\\
&\leq& \frac{C_1|\eta_1(x)|}{x}\sum_{j=1}^{[\sqrt{n}]}
\left(\bigvee_{x}^{x+\frac{x}{j}}f^{\prime}_x \right).
\end{eqnarray}Using (\ref{eq5.11})-(\ref{eq5.13}), we get
\begin{eqnarray}\label{eq5.14}
\int_x^{2x}\left|f_x^{\prime}(s)\right|\left(1-\xi_n(x,s)\right)ds &=& \frac{C_1|\eta_1(x)|}{x}\sum_{j=1}^{[\sqrt{n}]}
\left(\bigvee_{x}^{x+\frac{x}{j}}f^{\prime}_x \right)\nonumber\\
&&+\frac{x}{\sqrt{n}}\left(\bigvee_{x}^{x+\frac{x}{\sqrt{n}}}f^{\prime}_x \right).
\end{eqnarray}Now, from (\ref{eq5.10}) and (\ref{eq5.14}) and the Cauchy-Schwarz inequality taking into the account, we get
\begin{eqnarray}\label{eq5.15}
T_{n}(f^{\prime},x) &\leq& \mathcal{K}_{f}\int_{2x}^{\infty}(s^2+1)\underline{\mathcal{V}}_{n}(x,s)ds+|f(x)|\int_{2x}^{\infty}\underline{\mathcal{V}}_{n}(x,s)ds\nonumber\\
&& + \left|f^{\prime}(x^+)\right|\sqrt{\left|C_1\eta_1(x)\right|}+ \frac{C_1|\eta_1(x)|}{x^2}\left|f(2x)-xf^{\prime}(x^+)-f(x) \right|\nonumber\\
&&+\frac{x}{\sqrt{n}}\left(\bigvee_{x}^{x+\frac{x}{\sqrt{n}}}f^{\prime}_x \right) +\frac{C_1|\eta_1(x)|}{x}\sum_{j=1}^{[\sqrt{n}]}
\left(\bigvee_{x}^{x+\frac{x}{j}}f^{\prime}_x \right).
\end{eqnarray} Since $x\leq s-x $, and  $ s\leq 2(s-x)$ when $s\geq 2x$, we have
\begin{eqnarray}\label{eq5.16}
\mathcal{K}_{f}\int_{2x}^{\infty}(s^2+1)\underline{\mathcal{V}}_{n}(x,s)ds+|f(x)|\int_{2x}^{\infty}\underline{\mathcal{V}}_{n}(x,s)ds\hspace{3cm}\nonumber\\
\leq \left(\mathcal{K}_f+\left|f(x)\right|\right)\int_{2x}^{\infty}\underline{\mathcal{V}}_{n}(x,s)ds+ 4\mathcal{K}_f \int_{2x}^{\infty}(s-x)^2\underline{\mathcal{V}}_{n}(x,s)ds\hspace{1cm}\nonumber\\
\leq \frac{\left(\mathcal{K}_f+\left|f(x)\right|\right)}{x^2}\int_{0}^{\infty}(s-x)^2\underline{\mathcal{V}}_{n}(x,s)ds+4\mathcal{K}_f \int_{0}^{\infty}(s-x)^2\underline{\mathcal{V}}_{n}(x,s)ds \nonumber\\
\leq \left(4 \mathcal{K}_f +\frac{\left(\mathcal{K}_f+\left|f(x)\right|\right)}{x^2}\right)C_1|\eta_1(x)|.\hspace{5cm}
\end{eqnarray} Using (\ref{eq5.15}) and (\ref{eq5.16}), we have
\begin{eqnarray}\label{eq5.17}
T_{n}(f^{\prime},x) &\leq& \left(4 \mathcal{K}_f +\frac{\left(\mathcal{K}_f+\left|f(x)\right|\right)}{x^2}\right)C_1|\eta_1(x)|+ \left|f^{\prime}(x^+)\right|\sqrt{\left|C_1\eta_1(x)\right|}\nonumber\\
&&+ \frac{C_1|\eta_1(x)|}{x^2}\left|f(2x)-xf^{\prime}(x^+)-f(x) \right|+ \frac{x}{\sqrt{n}}\left(\bigvee_{x}^{x+\frac{x}{\sqrt{n}}}f^{\prime}_x \right) \nonumber\\
&& +\frac{C_1|\eta_1(x)|}{x}\sum_{j=1}^{[\sqrt{n}]} \left(\bigvee_{x}^{x+\frac{x}{j}}f^{\prime}_x \right).
\end{eqnarray} using (\ref{eq5.6}),(\ref{eq5.9}) and (\ref{eq5.17}), we reach to the required result.
\end{proof}
\section{Another extension of sequence of operators $\mathcal{B}_n\left(f;x\right)$ }
Here, we define the Sz'asz-type Durrmeyer variant of the operators $\mathcal{B}_n\left(f;x\right)$ under the same conditions and limitations as those outlined in the article's introduction. For $f\in C_{\sigma}\left[0,\infty\right) $ and
$p(x)= n^2x^2\mathcal{T}(1)+ nx \mathcal{U}(1)+\mathcal{V}(1)$, we have
\begin{align*}
\mathcal{B}^{*}_n\left(f;x\right)= \frac{n}{\mathcal{S}(1).\xi\left(p(x)\right)}\sum_{j=0}^{\infty}\Theta_{j}(nx)\int_{0}^{\infty}e^{-ns}\frac{(ns)^{j}}{j!}f(s)ds.
\end{align*} In a manner similar to that described in this article, one can prove the convergence results for the operators $\mathcal{B}^{*}_n\left(f;x\right) $.


\begin{thebibliography}{1}
\bibitem{SP1985}
Sahai, A., Prasad, G.: \emph{On simultaneous approximation by Modified Lupas operators}, J. Approx. Theory, 45 (1985), 122-128.
\bibitem{adg} Gadzjiv, A.D.: \emph{Theorems of the type of P.P. Korovkin type theorems}, Math. Zametki, 20(5) (1976), 781--786, English Translation , Math Notes 20(5-6) (1976), 996--998.

\bibitem{Ana:1}
Acu, A.M., Buscu, I.C.,  Rasa, I.: \emph{A sequence of Appell polynomials and the associated Jakimovski-Leviatan operators,} Anal. Math. Phys.,(2021) 11:88,
doi:https://doi.org/10.1007/s13324-021-00525-0.

\bibitem{Rainville}
Rainville, E.D.: \emph{Special Functions}, Macmillan, New York, USA, (1960).
\bibitem{Rasa}
Gavrea, I., Rasa I.: \emph{Remarks on some quantitative korovkin-type results}, Revue d'Analyse Num$\acute{e}$rique et de Th$\acute{e}$orie de l'Approximation, 22(2)(1993), 173-176.
\bibitem{Ispir}
 Y\"{u}ksel, I., Ispir, N.: \emph{Weighted approximation by a certain family of summation integral-type operators,} Comput. Math. Appl. 52(10-11) (2006), 1463-1470.
 \bibitem{PNA:1} Sidharth, M., Agrawal, P.N.,  Araci, S.: \emph{Sz\'{a}sz- Durrmeyer operators involving Boas-Buck polynomials of blending type,} J. Inequal. Appl. 2017(1):122,  doi:https//doi.org/ 10.1186/s13660-017-1396-x.

 \bibitem{SANDEO:1}
Deo, N., Kumar, S.: \emph{Durrmeyer variant of apostal-genocchi-baskakov operators,} Quaest. Math.(2020). https://doi.org/10.2989/16073606.2020.1834000.
\bibitem{szasz}
Sz\'{a}sz, O.: \emph{Generalization of S. Bernstein's polynomials to the infinite interval,} J. Res. Natl. Bur. Stand. 97 (1950), 239-245.
\bibitem{Ritu}
Goyal, R.: \emph{An extension of a class of generating functions,} Ultra Sci. Phys. Sci. \textbf{15}(2003), 289-292.

 \bibitem{Boas:1}
Boas, R.P., Buck, R.C.: \emph{Polynomials defined by generating relations}, Amer. Math., 63, (1956), 626-632.
\bibitem{Boas:2}
Boas, R.P., Buck, R.C.: \emph{Polynomial Expansion of analytic functions}, Springer Verlag, Berlin, (1958).

\bibitem{sucu:1}
Sucu, S., Verma, S.: \emph{Approximation by sequence of operators involving analytic functions}, Mathematics, 7(2)(2019), 188.
\bibitem{sucu:2}Verma, S., Sucu, S.: \emph{Operators obtained by using certain generating function for approximation},  Mathematics, 10(2022), 2239.
\bibitem{gupta:1}
Gupta, V., Rassias, T.M.: \emph{Moments of Linear Operators and Approximation}, (2019), Springer.
\bibitem{Ditz}
Ditzian, Z.,  Totik, V.: \emph{Moduli of Smoothness}, Springer, New York, 1987.
\end{thebibliography}
\end{document}